\title{Strong Stability Preserving Two-step Runge--Kutta Methods}
\author{%
  David I. Ketcheson\thanks{4700 King Abdullah University of Science \&
    Technology, Thuwal 23955, Saudi Arabia.
   (\protect\url{david.ketcheson@kaust.edu.sa}).
    The work of this author was supported by a
    U.S.  Dept. of Energy Computational Science Graduate Fellowship and
    by funding from King Abdullah University of Science and Technology (KAUST).}, 
    \and
  Sigal Gottlieb\thanks{Department of Mathematics, University of
    Massachusetts Dartmouth, North Dartmouth, MA 02747
    (\protect\url{sgottlieb@umassd.edu}). This work was
    supported by AFOSR grant number FA9550-09-1-0208.},
    \and
  Colin B. Macdonald\thanks{Mathematical Institute, University of
    Oxford, Oxford, OX1\,3LB, UK
    (\protect\url{macdonald@maths.ox.ac.uk}).  The work of this author
    was supported by an NSERC postdoctoral fellowship, NSF grant
    number CCF-0321917, and by Award No KUK-C1-013-04 made by King
    Abdullah University of Science and Technology (KAUST).}%
}
\begin{document}
\maketitle


\bibliographystyle{siam}

\begin{abstract} 
We investigate the strong stability preserving (SSP) property of 
two-step Runge--Kutta (TSRK) methods.  We prove that all SSP
TSRK methods belong to a particularly simple subclass of TSRK methods, 
in which stages
from the previous step are not used. We derive simple order conditions 
for this subclass.  Whereas explicit SSP Runge--Kutta
methods have order at  most four, we prove that explicit SSP TSRK methods
have order at most eight.  We present TSRK methods of up to
eighth order that were found by numerical search.  These
methods have larger SSP coefficients than any known methods of
the same order of accuracy, and may be implemented in a form 
with relatively modest storage requirements.
The usefulness of the TSRK methods is demonstrated through
numerical examples, including integration of very high order WENO
discretizations.
\end{abstract}


\newtheorem{thm}{Theorem}
\newtheorem{dfn}{Definition}
\newtheorem{lem}{Lemma}
\newtheorem{cor}{Corollary}
\newtheorem{rmk}{Remark}
\newcommand{\qh}{\hat{q}}
\newcommand{\be}{\begin{equation}}
\newcommand{\ee}{\end{equation}}
\newcommand{\bq}{\mathbf{q}}
\newcommand{\bx}{\mathbf{x}}
\newcommand{\by}{\mathbf{y}}
\newcommand{\br}{\mathbf{r}}
\newcommand{\imh}{{i-\frac{1}{2}}}
\newcommand{\iph}{{i+\frac{1}{2}}}
\newcommand{\ipmh}{{i \pm \frac{1}{2}}}
\newcommand{\jph}{{j+\frac{1}{2}}}
\newcommand{\Aop}{{\cal A}}
\newcommand{\Bop}{{\cal B}}
\newcommand{\Wop}{{\cal W}}
\newcommand{\Oop}{{\cal O}}
\newcommand{\DQ}{\Delta Q}
\newcommand{\Dq}{\Delta q}
\newcommand{\Dx}{\Delta x}
\newcommand{\Dy}{\Delta y}
\newcommand{\Du}{\Delta u}
\newcommand{\bu}{\mathbf{u}}
\newcommand{\bv}{\mathbf{v}}
\newcommand{\bw}{\mathbf{w}}
\newcommand{\bU}{\mathbf{U}}
\newcommand{\bV}{\mathbf{V}}
\newcommand{\bF}{\mathbf{F}}
\newcommand{\Lop}{{\cal L}}
\newcommand{\Sop}{{\cal S}}
\newcommand{\Fop}{{\cal F}}
\newcommand{\Dofr}{{\cal D}(r)}
\newcommand{\Dt}{\Delta t}
\newcommand{\bbA}{\mathbf{A}}
\newcommand{\bbZ}{\mathbf{Z}}
\newcommand{\bbK}{\mathbf{K}}
\newcommand{\bbI}{\mathbf{I}}
\newcommand{\bbb}{\bar{\mathbf{b}}}
\newcommand{\bbe}{\mathbf{e}}
\newcommand{\bbone}{\mathbf{1}}
\newcommand{\lnorm}{\left\|}
\newcommand{\rnorm}{\right\|}

\newcommand{\dx}{\Delta x}
\newcommand{\dt}{\Delta t}
\newcommand{\aij}{\alpha_{ij}}
\newcommand{\bij}{\beta_{ij}}
\newcommand{\lt}{\tilde{L}}

\newcommand{\hf}{\frac{1}{2}}
\def\half{\frac{1}{2}}
\newcommand{\fracStrut}{\rule[-1.0ex]{0pt}{3.1ex}}
\newcommand{\hfs}{\ensuremath{\frac{1}{2}}\fracStrut}
\newcommand{\scinot}[2]{\ensuremath{#1\times10^{#2}}}
\newcommand{\dee}{\mathrm{d}}
\newcommand{\dye}{\partial}
\newcommand{\diff}[2]{\frac{\dee #1}{\dee #2}}
\newcommand{\pdiff}[2]{\frac{\dye #1}{\dye #2}}
\newcommand{\Real}{\mathbb{R}}
\newcommand{\Complex}{\mathbb{C}}
\newcommand{\m}[1]{\mathbf{#1}}
\newcommand{\mA}{\m{A}}
\newcommand{\mAb}{\bar{\m{A}}}
\newcommand{\vah}{\hat{\v{a}}}
\newcommand{\mAh}{\hat{\m{A}}}
\newcommand{\mD}{\m{D}}
\newcommand{\mS}{\m{S}}
\newcommand{\mT}{\m{T}}
\newcommand{\mR}{\m{R}}
\newcommand{\mP}{\m{P}}
\newcommand{\mM}{\m{M}}
\newcommand{\mQ}{\m{Q}}
\newcommand{\mI}{\m{I}}
\newcommand{\mK}{\m{K}}
\newcommand{\mL}{\m{L}}
\newcommand{\mzero}{\m{0}}
\newcommand{\matalpha}{\boldsymbol{\upalpha}}
\newcommand{\matbeta}{\boldsymbol{\upbeta}}
\newcommand{\mattheta}{\boldsymbol{\uptheta}}
\newcommand{\mateta}{\boldsymbol{\upeta}}
\newcommand{\matmu}{\boldsymbol{\upmu}}
\newcommand{\matlambda}{\boldsymbol{\uplambda}}
\newcommand{\matgamma}{\boldsymbol{\upgamma}}
\newcommand{\matdelta}{\boldsymbol{\updelta}}
\renewcommand{\v}[1]{\boldsymbol{#1}}
\newcommand{\transpose}{^\mathrm{T}}
\newcommand{\thT}{\mattheta\transpose}
\newcommand{\bT}{\v{b}\transpose}
\newcommand{\bh}{\hat{b}}
\newcommand{\vbh}{\hat{\v{b}}}
\newcommand{\bhT}{\hat{\v{b}}\transpose}
\newcommand{\vb}{\v{b}}
\newcommand{\vc}{\v{c}}
\newcommand{\ve}{\v{e}}
\newcommand{\vu}{\v{u}}
\newcommand{\vl}{\v{l}}
\newcommand{\vv}{\v{v}}
\newcommand{\vy}{\v{y}}
\newcommand{\vd}{\v{d}}
\newcommand{\va}{\v{a}}
\newcommand{\vf}{\v{f}}
\newcommand{\Matlab}{{\sc Matlab}\xspace}
\newcommand{\BARON}{{\sc Baron}\xspace}
\newcommand{\code}[1]{\textsf{#1}}
\newcommand{\sspcoef}{\mathcal{C}}
\newcommand{\clin}{\sspcoef_{\textup{lin}}}
\newcommand{\ceff}{\sspcoef_{\textup{eff}}}
\newcommand{\DtFE}{\Dt_{\textup{FE}}}

\newcommand{\Inc}{\textrm{Inc}}

\newcommand{\ty}{\tilde{y}}
\newcommand{\tu}{\tilde{u}}
\newcommand{\pt}{\tilde{p}}
\newcommand{\lte}{\tau}
\newcommand{\ste}{\boldsymbol{\tau}}
\newcommand{\bgamma}{\boldsymbol{\gamma}}
\newcommand{\btheta}{\boldsymbol{\theta}}
\newcommand{\bty}{\mathbf{\ty}}
\newcommand{\btu}{\mathbf{\tilde{u}}}

\newcommand{\gerr}{\epsilon}
\newcommand{\gserr}{\boldsymbol{\epsilon}}
\newcommand{\gsteperr}{\bar{\boldsymbol{\epsilon}}}
\newcommand{\rhserr}{\delta}
\newcommand{\rhsserr}{\boldsymbol{\delta}}

\newcommand{\bff}{\mathbf{f}}
\newcommand{\bFf}{\mathbf{F}}
\newcommand{\bfb}{\mathbf{f}_u}
\newcommand{\bfbt}{\tilde{\mathbf{f}}_u}
\newcommand{\btf}{\mathbf{\tilde{f}}}
\newcommand{\bone}{\mathbf{1}}
\newcommand{\bb}{\mathbf{b}}
\newcommand{\bbh}{\hat{\mathbf{b}}}
\newcommand{\bc}{\mathbf{c}}
\newcommand{\bt}{\mathbf{t}}
\newcommand{\bg}{\mathbf{g}}
\newcommand{\bfe}{\mathbf{e}}
\newcommand{\mC}{\m{C}}

\renewcommand{\v}[1]{\mathbf{#1}}


\section{Strong Stability Preserving Methods}
\label{sec:intro}
The concept of strong stability preserving methods  was first introduced by Shu and Osher in 
\cite{shu1988} for use with total variation diminishing spatial discretizations of a hyperbolic conservation law:
\begin{eqnarray*}
U_t +f(U)_x = 0.
\end{eqnarray*}
When the spatial derivative is discretized, we obtain the  system of ODEs 
\begin{eqnarray}
\label{ode}
u_t = F(u),
\end{eqnarray}
where $u$ is a vector of approximations to $U$,  $u_j \approx U(x_j) $. 
The spatial discretization is carefully designed so that when this ODE
is fully discretized using the forward Euler  method, certain convex functional properties
(such as the  total variation) of the numerical solution do not increase,
\begin{eqnarray} \label{monotonicity}
\| u^n + \dt F(u^{n}) \| \leq \| u^n \|
\end{eqnarray}
for all small enough step sizes $\dt \leq \DtFE$.  Typically, we need methods of  higher order 
and we wish to guarantee that the higher-order time discretizations will preserve this strong stability 
property.
This guarantee is obtained by observing that if a time discretization can be decomposed into convex combinations
of forward Euler steps, then any convex functional  property
(referred to herein as a {\em strong stability} property)
satisfied by forward Euler will be {\em preserved}
by the higher-order time discretizations, perhaps under a different time-step restriction.

%
Given a semi-discretization of the form \eqref{ode} and convex  functional 
$\|\cdot\|$, we assume that there exists a value $\DtFE$ such that, for
all $u$,
\be \label{FEcond} \|u+\dt F(u)\|\leq \|u\| \mbox{   for } 0\le \Dt \le \DtFE.\ee
A $k$-step numerical method for \eqref{ode} computes 
the next solution value $u^{n+1}$ from previous values
$u^{n-k+1},\dots,u^n$.
We say that the method is {\em strong stability preserving} (SSP) 
if (in the solution of \eqref{ode}) it holds that
\be \label{kstepmonotonicity}
\|u^{n+1}\|\le\max\left\{\|u^n\|,\|u^{n-1}\|,\dots,\|u^{n-k+1}\|\right\}.
\ee
whenever \eqref{FEcond} holds and the timestep satisfies
\be \label{tstepcond} \Dt\le \sspcoef \DtFE. \ee
Throughout this work, $\sspcoef$ is taken to be
the largest value such that \eqref{tstepcond} and \eqref{FEcond} 
together always imply \eqref{kstepmonotonicity}.
This value $\sspcoef$ is called the {\em SSP coefficient} of the method.

For example, consider explicit multistep methods  \cite{shu1988b}:
\begin{eqnarray} \label{lmmSO}
u^{n+1} & = & \sum_{i=1}^{k} \left( \alpha_i u^{n+1-i} +
\dt \beta_i F(u^{n+1-i}) \right).
\end{eqnarray}  
Since $\sum_{i=1}^{k} \alpha_i =1$ for any consistent method, any such method
can be written as convex combinations of forward Euler steps if all the 
coefficients are non-negative:
\begin{eqnarray*}
u^{n+1} & = & \sum_{i=1}^{k} \alpha_i  \left( u^{n+1-i} +
\frac{\beta_i}{\alpha_i} \dt  F(u^{n+1-i}) \right).
\end{eqnarray*}  
If the forward Euler method applied to \eqref{ode}
is strongly stable under the timestep restriction $\Dt\le\DtFE$ and
$\alpha_i,\beta_i\ge0$  then the solution obtained by the
 multistep method (\ref{lmmSO})
satisfies the strong stability bound \eqref{kstepmonotonicity} 
under the timestep restriction
\begin{equation*}
\dt \leq  \min_{i} \frac{\alpha_i}{\beta_i} \DtFE, 
\end{equation*}
(if any of the $\beta$'s are equal to zero, the corresponding 
ratios are considered infinite).

In the case of a one-step method the monotonicity requirement \eqref{kstepmonotonicity} reduces to
$$ \|u^{n+1}\|\le\|u^n\|.  $$
For example,  an $s$-stage explicit Runge--Kutta method is written in the form \cite{shu1988},
\begin{eqnarray}
\label{rkSO}
u^{(0)} & =  & u^n, \nonumber \\
u^{(i)} & = & \sum_{j=0}^{i-1} \left( \aij u^{(j)} +
\dt \bij F(u^{(j)}) \right), \\
 u^{n+1} & = & u^{(s)}. \nonumber
\end{eqnarray}
If all the coefficients are non-negative,
each stage of   the Runge--Kutta method can be rearranged into convex 
combinations of forward Euler steps, with a modified step size:
\begin{eqnarray*}
\| u^{(i)}\| & =  & 
\| \sum_{j=0}^{i-1} \left( \aij u^{(j)} + \dt \bij F(u^{(j)}) \right) \|   \nonumber \\
& \leq &  \sum_{j=0}^{i-1} \aij  \, \left\| u^{(j)} + \dt \frac{\bij}{\aij} F(u^{(j}) \right\| .  
\end{eqnarray*}
Now, since each $\| u^{(j)} + \dt \frac{\bij}{\aij} F(u^{(j)}) \| \leq \| u^{(j)} \|$ as 
long as 
$ \frac{\bij}{\aij}  \dt \leq \DtFE$, and since  $\sum_{j=0}^{i-1} \aij =1$ by consistency,
we have $ \| u^{n+1}\| \leq \| u^{n}\| $ as long as  $ \frac{\bij}{\aij} \dt \leq  \DtFE$ 
for all $i$ and $j$.
Thus, if the forward Euler method applied to \eqref{ode}
is strongly stable under the timestep restriction $\Dt\le\DtFE$, {\em i.e.} 
\eqref{FEcond} holds, and if $\aij,\bij\ge0$ 
then the solution obtained by the Runge--Kutta method \eqref{rkSO} 
satisfies the strong stability bound \eqref{monotonicity} 
under the timestep restriction
\begin{equation*}
\dt \leq \min_{i,j} \frac{\aij}{\bij} \DtFE.
\end{equation*}
As above, if any of the $\beta$'s are equal to zero, the corresponding 
ratios are considered infinite.

This approach can easily be generalized to implicit Runge--Kutta methods and
implicit linear multistep methods.  Thus it provides
sufficient conditions for strong stability of high-order explicit and implicit
Runge--Kutta and multistep methods. In fact, it can be shown 
from the connections between
SSP theory and contractivity theory \cite{ferracina2004,
ferracina2005,higueras2004a, higueras2005a}
that these conditions are not only sufficient, they are necessary as well. 

Research in the field of SSP methods focuses on finding high-order time discretizations
with the largest allowable time-step.  Unfortunately, 
explicit SSP Runge--Kutta methods with positive coefficients cannot be more than
fourth-order accurate \cite{kraaijevanger1991,ruuth2001}, 
and  explicit SSP linear
multistep methods of high-order accuracy require very many steps in order to
have reasonable timestep restrictions.  For instance, to obtain a fifth-order explicit linear multistep method with 
a time-step restriction of $\dt \le 0.2 \DtFE$
 requires nine steps; for a sixth-order method,
this increases to thirteen steps \cite{lenferink1989}. In practice, the large storage requirements of these
 methods make them unsuitable for
the solution of the large  systems of ODEs resulting from semi-discretization of a PDE.
Multistep methods with larger SSP coefficients and fewer stages have been obtained by considering 
special starting procedures \cite{hundsdorfer2005,ruuth2005}.  


Because of the lack of practical explicit SSP methods of very high order, high-order
spatial discretizations for hyperbolic PDEs are often paired with lower-order time discretizations; some examples of this include
\cite{carrillo2003,cheng2003,cheruvu2007,enright2002,feng2004,jin2005,
labrunie2004,peng1999,tanguay2003}.  This may lead 
to loss of accuracy, particularly for long time simulations.
In an extreme case \cite{gerolymos2009},
WENO schemes of up to $17$th-order 
were paired with third-order SSP Runge--Kutta time integration; of course,
convergence tests indicated only third-order convergence for the
fully discrete schemes.
Practical higher-order accurate SSP time discretization methods are needed for 
the time evolution of ODEs resulting from high-order spatial discretizations.

To obtain higher-order explicit SSP time discretizations, methods that 
include both multiple steps and multiple stages have been considered.
These methods are a subclass of explicit general linear methods that 
allow higher order with positive  SSP coefficients.
Gottlieb et.~al.~considered a class of two-step, two-stage methods \cite{gottlieb2001}.
Another class of such methods was considered by Spijker \cite{spijker2007}.
Huang \cite{huang2009} considered hybrid methods with many steps, and
 found methods of up to seventh-order (with seven steps) with reasonable SSP coefficients.
 Constantinescu and Sandu \cite{constantinescu2009} considered two- and three-step Runge--Kutta methods,
 with a focus on finding SSP methods with stage order up to four.

In this work we consider a class of two-step multi-stage Runge--Kutta methods, which are a generalization of both linear 
multistep methods and Runge--Kutta methods.  We have found that deriving 
the order conditions using a generalization of the approach presented in 
\cite{albrecht1996},
and formulating the optimization problem using the approach from
\cite{ketcheson2009} allows us to efficiently find methods of up to eighth 
order with relatively modest storage requirements and large effective SSP coefficient.
We also report optimal lower-order methods; our results agree with those of \cite{constantinescu2009}
for second, third, and fourth-order methods of up to four stages, and improve upon other
methods previously found both in terms of order and the size of the SSP coefficient.

The major result of this paper is the development of SSP two-step Runge--Kutta 
methods 
of up to eighth order that are efficient and practical.  In Section 2,
we discuss some classes of  two-step Runge--Kutta (TSRK) methods and prove that all
SSP TSRK methods belong to one of two simple subclasses.
In Section 3, we derive order conditions and show
that explicit SSP TSRK methods have order at most eight.
In Section 4, we formulate the optimization problem, give an efficient form 
for the implementation of SSP two-step Runge--Kutta methods, 
and present optimal methods of up to eighth order. The properties of our
methods are compared with those of
existing SSP methods including Runge--Kutta, linear multi-step, 
and hybrid methods \cite{huang2009}, as well as the two- and three-step methods in 
 \cite{constantinescu2009}.  Numerical verification of the optimal methods 
 and  a demonstration of the need for high-order time discretizations for use with 
 high-order spatial discretizations is presented in Section 5.  
 Conclusions and future work are discussed in Section 6.

\section{SSP Two-step Runge--Kutta Methods\label{sect:sspglms}}
The principal focus of this work is on the strong stability preserving
properties of two-step Runge--Kutta (TSRK) methods.
A general class of TSRK methods was studied in 
\cite{jackiewicz1995,butcher1997,hairer1997,verner2006}.
TSRK methods are a generalization of Runge--Kutta methods that include 
values and stages from the previous step:
\begin{subequations} \label{tsrk_JT}
\begin{align}
y_i^n &= d_i u^{n-1} + (1-d_i) u^n  + \Dt\sum_{j=1}^{s} \hat{a}_{ij} F(y_j^{n-1})
 + \Dt\sum_{j=1}^{s} a_{ij} F(y_j^n), &  1 \le i \le s,\\
u^{n+1} &= \theta u^{n-1} + (1-\theta) u^n  +\Dt\sum_{j=1}^{s} \bh_{j} F(y_j^{n-1})
 + \Dt\sum_{j=1}^{s} b_j F(y_j^n).
\end{align}
\end{subequations}
Here $u^n$ and $u^{n-1}$ denote solution values at the times $t=n\Dt$
and $t=(n-1)\Dt$, while the values $y^n_i$ are intermediate stages
used to compute the solution at the next time step.  We will use the
 matrices and vectors $\mA$, $\mAh$, $\vb$, $\vbh$, and
$\vd$ to refer to the coefficients of the method.

We are interested only in TSRK methods that have the strong stability preserving
property.  As we will prove in Theorem~\ref{thm:allssptsrk}, this greatly reduces
the set of methods relevant to our study.  Except in special cases, the method
\eqref{tsrk_JT} cannot be strong stability preserving unless all
of the coefficients $\hat{a}_{ij},\bh_j$ are identically zero.
A brief explanation of this requirement is as follows.
Since method \eqref{tsrk_JT} does not include terms of the form
$y_i^{n-1}$, it is not possible to write
a stage of method \eqref{tsrk_JT} as a convex
combination of forward Euler steps if the stage includes terms of the
form $F(y_i^{n-1})$.  This is because those stages depend on $u^{n-2}$,
which is not available in a two-step method.

Hence we are led to consider simpler methods of the following form
(compare \cite[p. 362]{Hairer:ODEs2}).  We call these {\bf Type I} methods:
\begin{subequations}
\label{tsrk_b}
\begin{align}
y_i^n &= d_i u^{n-1} + (1-d_i) u^n
 + \Dt\sum_{j=1}^{s} a_{ij} F(y_j^n), &  1 \le i \le s,\\
u^{n+1} &= \theta u^{n-1} + (1-\theta) u^n 
 + \Dt\sum_{j=1}^{s} b_j F(y_j^n).
\end{align}
\end{subequations}

Now consider the special case in which the method \eqref{tsrk_JT} has some stage
$y_i^n$ identically equal to $u^n$.
Then including terms proportional to $F(u^n)$ will not prevent
the method from being written as a convex combination of forward Euler
steps; furthermore, since $y_i^{n-1}=u^{n-1}$, terms of the form $F(u^{n-1})$
can also be included.  This leads to what we will call {\bf Type II}
methods, which have the form:
\begin{subequations} \label{tsrk_c}
\begin{align}
y_1^n &= u^n, \\
y_i^n &= d_i u^{n-1} + (1-d_i) u^n 
 + \hat{a}_{i} \Dt F(u^{n-1}) 
 + \Dt\sum_{j=1}^{s} a_{ij} F(y_j^n), &  2 \le i \le s,\\
u^{n+1} &= \theta u^{n-1} + (1-\theta) u^n 
 + \hat{b}_{1} \Dt F(u^{n-1}) 
 + \Dt\sum_{j=1}^{s} b_j F(y_j^n).
\end{align}
\end{subequations}
Here we have assumed that the first stage is the one equal to $u^n$,
which involves no loss of generality.  We can refer to the
coefficients of Type II methods in the matrix/vector notation of
\eqref{tsrk_JT} except that matrix $\mAh$ reduces to vector $\vah$ and
we have $d_1=\hat{a}_{1}=0$ and $a_{1j}=0$ for all $1 \le j \le s$.

\begin{rmk} \label{rmk1}
From a theoretical perspective, the distinction between Type I and
Type II methods may seem artificial, since
the class of all Type I methods is equivalent to
the class of all Type II methods.
From a practical perspective, however, the distinction is very
useful.  Transforming a given method from one type
to the other generally requires adding a stage.
Thus the class of $s$-stage Type I methods and the class of $s$-stage 
Type II methods are distinct (though not disjoint).  So it is natural
to refer to a method as being of Type I or Type II, depending on 
which representation uses fewer stages; this convention is used
throughout the present work.
\end{rmk}
\begin{rmk}
Type I methods \eqref{tsrk_b} and Type II methods \eqref{tsrk_c} 
are equivalent to the (two-step) methods of Type 4 and Type 5, 
respectively, considered in \cite{constantinescu2009}.
\end{rmk}



\subsection{The Spijker Form for General Linear Methods}
TSRK methods are a subclass of general linear methods.
In this section, we review the  theory of strong stability preservation for general 
linear methods  \cite{spijker2007}.   A general linear method can be written in the 
form
\begin{subequations}
  \label{spijkerform}
  \begin{align}
    \label{spijkerform-a}
    w_i^n &=  \sum_{j=1}^l s_{ij} x^{n}_j + \Dt \sum_{j=1}^m t_{ij} F(w_j^n), & (1\le i\le m), \\
    x^{n+1}_j &=  w^n_{J_j}, & (1\le j \le l).  \label{spijkerform-b}
  \end{align}
\end{subequations}
The terms $x^{n}_j $ are the  $l$  input values available from previous steps,
while the  $w_j^n$ includes both the output values and intermediate stages 
used to compute them.  Equation \eqref{spijkerform-b} indicates which of these
values are used as inputs in the next step.

We will frequently write the coefficients $s_{ij}$ and $t_{ij}$
as a $m\times l$ matrix  $\mS$ and a $m\times m$ matrix $\mT$,
respectively.
Without loss of generality (see \cite[Section 2.1.1]{spijker2007})
we assume that 
\be \label{eq:Ssum}
\mS \ve = \ve,
\ee
where $\ve$ is a vector with all entries equal to unity.
This implies that every stage is a consistent approximation to the solution
at some time.

Runge--Kutta methods, multi-step methods, and multi-step Runge--Kutta methods 
are all subclasses of general linear methods, and can be written
in the form \eqref{spijkerform}.  For example,
an $s$-stage Runge--Kutta method with Butcher coefficients $\mA$ and $\vb$
can be written in form \eqref{spijkerform} by taking $l=1, m=s+1$, $J=\{m\}$,
and 
\begin{align*}
\mS=\left(1 ,1 , \dots , 1 \right)\transpose,
& & \mT=\left(\begin{array}{cc} \mA & \mzero\\ \vb\transpose & 0 
\end{array}\right) .
\end{align*}
Linear multistep methods
\[ u^{n+1} = \sum_{j=1}^l \alpha_j u^{n+1-j} + \Dt  \sum_{j=0}^l \beta_j F \left(u^{n+1-j} \right), \]
admit the  Spijker form
\[ \mS= \begin{pmatrix} 
1      & 0      &  \dots & 0  \\
0      & \ddots &  \ddots& \vdots  \\
\vdots & \ddots &  \ddots& 0  \\
0      & \dots  &  0     & 1  \\
 \alpha_l & \alpha_{l-1} & \dots  & \alpha_1  \end{pmatrix}, \\ \; \; \; 
\mT^{(l+1)\times (l+1)} = \begin{pmatrix} 
0      & 0      &  \dots & 0  \\
0      & \ddots &  \ddots& \vdots  \\
\vdots & \ddots &  \ddots& 0  \\
0      & \dots  &  0     & 0  \\
 \beta_l & \beta_{l-1} & \dots  & \beta_0  \end{pmatrix}, 
\]
where $l$ is the number of steps, $m=l+1$, and $J=\{2, \dots, l+1\}$.

General TSRK methods \eqref{tsrk_JT} can be written in Spijker form 
as follows: set $m=2s+2$, $l=s+2$, $J=\{s+1,s+2,\dots,2s+2\}$, and
\begin{subequations} \label{tsrk_spijker}
\begin{align}
\bx^n & = \left(u^{n-1}, y_1^{n-1}, \dots, y_s^{n-1}, u^{n} \right)\transpose, \\
\bw^n & =\left(y_1^{n-1},y_2^{n-1}, \dots, y^{n-1}_s,u^{n},y_1^n,y_2^n, \dots, y^n_s, u^{n+1} \right)\transpose, \\
 \mS & = \begin{pmatrix}
\mzero & \mI & \mzero \\ 0 & \mzero & 1 \\ \vd & \mzero & \ve-\vd \\ \theta & \mzero & 1-\theta \end{pmatrix},\ \ \ \ \ \
\mT = \begin{pmatrix}
\mzero & \mzero & \mzero & \mzero \\
\mzero & 0 & \mzero & 0 \\
\mAh & \mzero & \mA   & \mzero \\
\vbh\transpose & \mzero & \vb\transpose & 0
\end{pmatrix}.
\end{align}
\end{subequations}

Type I methods \eqref{tsrk_b} can be written in a simpler form
with $m=s+2$, $l=2$, $J=\{1,  s+2\}$, and
\begin{align*}
\bx^n = \left(u^{n-1}, u^{n} \right)\transpose, \;  \; \; \; \; &
\bw^n =\left(u^{n},y_1^n,y_2^n, \dots, y^n_s, u^{n+1} \right)\transpose, \\
 \mS = \left( \begin{array}{cc}
0 & 1 \\ \vd & \ve-\vd \\ \theta & 1-\theta \end{array} \right),\ \ \ \ \ \
& \mT = \begin{pmatrix}
0 & \mzero & 0 \\
\mzero & \mA   & \mzero \\
0 & \vb\transpose & 0
\end{pmatrix}.
\end{align*}

Type II methods \eqref{tsrk_c} can also be written in a simpler form
with $m=s+2$, $l=2$, $J=\{2,  s+2\}$:
\begin{align*}
\bx^n = \left(u^{n-1}, u^{n} \right)\transpose, \;  \; \; \; \; &
\bw^n =\left(u^{n-1},u^n,y_2^n, \dots, y^n_s, u^{n+1} \right)\transpose, \\
 \mS = \left( \begin{array}{cc}
1 & 0 \\ \vd & \ve-\vd \\ \theta & 1-\theta \end{array} \right),\ \ \ \ \ \
& \mT = \begin{pmatrix}
0 & \mzero & 0 \\
\vah & \mA   & \mzero \\
\hat{b}_1 & \vb\transpose & 0
\end{pmatrix}.
\end{align*}

\subsection{The SSP Coefficient for General Linear Methods}
In order to analyze the SSP property of a general linear method
\eqref{spijkerform}, we first define the vector 
$\vf=[F(w_1),F(w_2),\dots,F(w_m)]\transpose$, so that 
\eqref{spijkerform-a} can be written compactly as
\begin{align}
\label{spijkerform-compact}
\bw & = \mS\bx + \Dt \mT \vf.
\end{align}
Adding
$r\mT \bw$
to both sides of \eqref{spijkerform-compact} gives

\begin{align*}
    \left(\mI+r\mT\right) \bw & = \mS\bx 
                      + r\mT\left(\bw+\frac{\Dt}{r}\vf\right).
\end{align*}
Assuming that the matrix on the left is invertible  we  obtain,
\begin{align}  
    \bw & = (\mI+r\mT)^{-1} \mS\bx 
                      + r(\mI+r\mT)^{-1}\mT\left(\bw+\frac{\Dt}{r}\vf\right)  \nonumber \\ 
        & = \mR \bx + \mP\left(\bw+\frac{\Dt}{r}\vf\right), \label{canonical} 
\end{align}
where we have defined
\be   \label{canonical2}
\mP=r (\mI+r\mT)^{-1} \mT, \ \ \ \ \mR= (\mI+r\mT)^{-1} \mS= (\mI-\mP)\mS .
\ee

Observe that, by the consistency condition \eqref{eq:Ssum}, the row sums of $[\mR \ \mP]$ are 
each equal to one:
$$
\mR\ve+\mP\ve = (\mI-\mP)\mS\ve+\mP\ve = \ve - \mP\ve + \mP\ve = \ve.
$$
Thus, if $\mR$ and $\mP$ have no negative entries, each stage $w_i$ is
given by a convex combination of the inputs $x_j$ and the quantities
$w_j+(\Dt/r) F(w_j)$. 
In other words, this method is a convex combination of forward Euler steps.
Hence any strong stability property of the forward Euler method is 
preserved by the method \eqref{spijkerform-compact} under the time step
restriction given by $\dt \leq \sspcoef(\mS,\mT)\DtFE$ where
$\sspcoef(\mS,\mT)$ is defined as
\begin{align*}
\sspcoef(\mS,\mT) & = \sup_{r}\left\{r : (I+r \mT)^{-1} \mbox{ exists and }  \mP \ge 0, \mR \ge 0 
\right\},
\end{align*}
where $\mP$ and $\mR$ are defined in \eqref{canonical2}.
By the foregoing observation, it is clear that the SSP coefficient of method
\eqref{canonical} is greater than or equal to $\sspcoef(\mS,\mT)$.

To state precisely the conditions under which the SSP coefficient 
is, in fact, equal to $\sspcoef(\mS,\mT)$, we must introduce the concept of 
reducibility.  
A Runge--Kutta method is said to be {\em reducible} if 
there exists a method with fewer stages that always produces the same output.
One kind of reducibility is known as {\em HS-reducibility};
a Runge--Kutta method is HS-reducible if two of its stages are identically
equal.  This definition is extended in a natural way to general linear
methods in \cite[Theorem~3.1]{spijker2007}; hereafter we refer to the reducibility
concept defined there also as {\em HS-reducibility}.
%

\begin{lem} (\cite[Theorem~3.1]{spijker2007}) \label{thm:spijker_coef}
Let $\mS,\mT$ be an HS-irreducible representation of a general linear
method.  Then the SSP coefficient of the method is 
$\sspcoef = \sspcoef(\mS,\mT)$.
\end{lem}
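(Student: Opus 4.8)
The plan is to prove that the lower bound $\sspcoef \ge \sspcoef(\mS,\mT)$, which was already established in the text preceding the statement, is actually an equality when the representation $(\mS,\mT)$ is HS-irreducible. Since the inequality $\sspcoef \ge \sspcoef(\mS,\mT)$ is in hand, what remains is the reverse inequality: I must show that if the timestep restriction is violated, i.e. $\dt > \sspcoef(\mS,\mT)\DtFE$, then there exists a semidiscretization $F$ satisfying the forward Euler condition \eqref{FEcond} and an initial configuration for which the monotonicity bound \eqref{kstepmonotonicity} fails. Equivalently, I would fix $r > \sspcoef(\mS,\mT)$ and show the method cannot be monotonic at stepsize $\dt = r\,\DtFE$.

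First I would recall the canonical representation \eqref{canonical}: writing $\bw = \mR\bx + \mP(\bw + (\dt/r)\vf)$, where $\mR = (\mI+r\mT)^{-1}\mS$ and $\mP = r(\mI+r\mT)^{-1}\mT$, with row sums of $[\mR\ \mP]$ equal to one. The key dichotomy is: either $(\mI+r\mT)$ is singular, or $(\mI+r\mT)^{-1}$ exists but $\mP$ or $\mR$ has a negative entry. In the singular case one builds a counterexample exploiting the null vector; in the negative-entry case one exploits the row with a negative coefficient. The standard device (going back to Kraaijevanger and used by Spijker) is to choose $F$ to be a scalar linear map $F(w) = \lambda w$ with $\lambda = -1/\DtFE$, so that the forward Euler condition holds with equality, $|w + \DtFE F(w)| = 0 \le |w|$, and then the whole method reduces to a linear recurrence whose amplification can be computed from $\mR$, $\mP$. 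With a negative entry, say $\mP_{ik} < 0$ (or $\mR_{ik}<0$), one picks inputs/stages so that the corresponding convex-combination-like identity for $w_i$ produces $|w_i|$ strictly larger than the max of the $|x_j|$; the row-sum-one property is what makes the negative entry genuinely destabilizing rather than harmless.

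The main obstacle — and the reason the HS-irreducibility hypothesis is needed — is ruling out the possibility that a negative or singular entry is an artifact of a redundant representation that does not actually affect the output. This is precisely where I would invoke \cite[Theorem~3.1]{spijker2007}: HS-irreducibility guarantees that no two $w_i$ coincide identically as functions of the data, so the "bad" stage or component genuinely enters the computed output $u^{n+1}$ and the instability it creates cannot be cancelled. I would structure this by contradiction: suppose $\sspcoef > \sspcoef(\mS,\mT)$; pick $r$ strictly between them so that monotonicity holds at $\dt = r\DtFE$ but $\mP,\mR$ fail to be nonnegative (or $\mI+r\mT$ is singular); then construct the linear scalar $F$ and data above to violate \eqref{kstepmonotonicity}, using HS-irreducibility to ensure the violating stage is not reducible away — contradiction.

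Since this lemma is quoted verbatim from \cite[Theorem~3.1]{spijker2007}, in the paper itself the honest move is simply to cite it rather than reproduce Spijker's argument; I would state that the result is established in that reference and that the nontrivial direction $\sspcoef \le \sspcoef(\mS,\mT)$ follows from the counterexample construction sketched above, with the HS-irreducibility hypothesis entering exactly to prevent cancellation of the destabilizing stage. The forward direction, $\sspcoef \ge \sspcoef(\mS,\mT)$, has already been demonstrated in the discussion leading up to the statement via the convex-combination-of-Euler-steps argument, so no further work is required there.
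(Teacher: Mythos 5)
The paper gives no proof of this lemma at all---it is quoted directly from \cite[Theorem~3.1]{spijker2007}---and your proposal correctly identifies citation as the intended treatment, so you match the paper's approach. One caveat on your supplementary sketch: taking $F$ to be a scalar linear map only tests absolute monotonicity of the method's stability function, which yields a strictly weaker necessary condition than $\sspcoef(\mS,\mT)$ in general; Spijker's actual necessity argument constructs a vector-valued $F$ and convex functional tailored to activate the offending negative entry of $\mP$ or $\mR$, but since you (like the paper) defer to the reference, this does not undermine your answer.
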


\subsection{Restrictions on the coefficients of SSP TSRK methods}
In light of Lemma~\ref{thm:spijker_coef}, we are interested
in methods with $\sspcoef(\mS,\mT)>0$.
The following lemma characterizes such methods.
\begin{lem}(\cite[Theorem 2.2(i)]{spijker2007})   \label{lem:inc}
$\sspcoef(\mS,\mT)>0$ if and only if all of the following hold:
\begin{subequations}   
  \label{sspcc}
\begin{align}
\mS &\ge 0  \label{sspcc1}, \\
\mT &\ge 0 \label{sspcc2}, \\
\textup{Inc}(\mT\mS) &\le \textup{Inc}(\mS) \label{sspcc3}, \\
\textup{Inc}(\mT^2)  &\le \textup{Inc}(\mT) \label{sspcc4}.
\end{align}
\end{subequations}
where all the inequalities are element-wise and 
the incidence matrix of a matrix $\mM$ with entries $m_{ij}$ is 
\[
\textup{Inc}(\mM)_{ij} = \begin{cases}1 & \textrm{if } m_{ij} \ne 0 \\
                                    0 & \textrm{if } m_{ij} = 0.\end{cases}
\]
\end{lem}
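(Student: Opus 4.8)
The plan is to establish the equivalence between $\sspcoef(\mS,\mT)>0$ and the four conditions \eqref{sspcc1}--\eqref{sspcc4} by analyzing the behavior of the matrices $\mP = r(\mI+r\mT)^{-1}\mT$ and $\mR = (\mI-\mP)\mS$ for small and large $r$. First I would note that this lemma is quoted as \cite[Theorem 2.2(i)]{spijker2007}, so in principle one may simply invoke it; but a self-contained argument proceeds as follows. For the ``only if'' direction, suppose $\sspcoef(\mS,\mT)>0$, so there is some $r>0$ for which $(\mI+r\mT)^{-1}$ exists and $\mP\ge 0$, $\mR\ge 0$. Expanding $(\mI+r\mT)^{-1} = \sum_{k\ge 0}(-r\mT)^k$ for $r$ small enough that the series converges, one gets $\mP = r\mT - r^2\mT^2 + r^3\mT^3 - \cdots$ and $\mR = \mS - r\mT\mS + r^2\mT^2\mS - \cdots$. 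Since $\mT$ is (strictly lower triangular in the relevant orderings, hence) nilpotent for explicit methods — and more generally the Neumann series terminates after finitely many terms — these are genuine finite polynomial identities in $r$.

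The key mechanism is that for $\mP\ge 0$ to hold for arbitrarily small positive $r$, the leading term $r\mT$ must dominate: taking $r\to 0^+$ forces $\mT\ge 0$, which is \eqref{sspcc2}, and similarly $\mR\to\mS$ forces $\mS\ge 0$, which is \eqref{sspcc1}. For the incidence conditions, the point is that once $\mT\ge 0$ and $\mS\ge 0$, no cancellation can occur between a term of one sign and a term of another within the same matrix entry after the first. Concretely, if $(\mT^2)_{ij}\ne 0$ but $\mT_{ij}=0$, then $\mP_{ij} = -r^2(\mT^2)_{ij} + O(r^3)$; since $\mT\ge0$ we have $(\mT^2)_{ij}>0$, so $\mP_{ij}<0$ for small $r>0$, contradicting $\mP\ge0$. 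This gives $\textup{Inc}(\mT^2)\le\textup{Inc}(\mT)$, i.e. \eqref{sspcc4}. The same argument applied to $\mR = \mS - r\mT\mS + \cdots$: if $(\mT\mS)_{ij}\ne0$ but $\mS_{ij}=0$, then $\mR_{ij} = -r(\mT\mS)_{ij} + O(r^2) < 0$ for small $r$, yielding \eqref{sspcc3}. One must be slightly careful that the ``$O(r^k)$'' higher-order terms do not themselves vanish identically and conspire; but the sign of the lowest-order nonvanishing term in $r$ is what controls positivity near $r=0$, and that term is exactly the one exhibited.

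For the ``if'' direction, assume \eqref{sspcc1}--\eqref{sspcc4}. One shows $\mP\ge0$ and $\mR\ge0$ for all sufficiently small $r>0$. Writing $\mP = r\mT(\mI+r\mT)^{-1} = r\mT - r^2\mT^2(\mI+r\mT)^{-1}$, the entry $\mP_{ij}$ is a polynomial in $r$ (by nilpotence/finiteness) whose lowest-order term, when nonzero, has the form $(\pm r^k)(\mT^k)_{ij}$ with $(\mT^k)_{ij}\ge0$; condition \eqref{sspcc4} iterated shows that whenever $(\mT^k)_{ij}\ne 0$ for some $k\ge 2$ we also have $\mT_{ij}\ne0$, so the lowest-order contribution to $\mP_{ij}$ comes from $r\mT_{ij}\ge 0$ and is the dominant term for small $r$ — hence $\mP_{ij}\ge 0$ near $0$, with the case $\mT_{ij}=0$ forcing the whole entry to vanish. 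An identical argument using \eqref{sspcc3} handles $\mR = \mS - \mP\mS$, since $(\mP\mS)_{ij}$ inherits the incidence of $(\mT\mS)_{ij}$, which is bounded by that of $\mS_{ij}$. Therefore there is an interval $(0,r_0)$ on which both matrices are nonnegative, so $\sspcoef(\mS,\mT)\ge r_0 > 0$.

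The main obstacle, and the place requiring the most care, is the incidence-matrix bookkeeping in both directions: one must verify that the ``lowest-order term in $r$ controls the sign'' heuristic is rigorous, which amounts to checking that conditions \eqref{sspcc3}--\eqref{sspcc4}, when iterated, propagate to all powers — i.e. $\textup{Inc}(\mT^k)\le\textup{Inc}(\mT)$ and $\textup{Inc}(\mT^k\mS)\le\textup{Inc}(\mS)$ for all $k$ — and that the expansions of $\mP$ and $\mR$ are genuinely finite (which holds because $\mT$ is strictly triangular after suitable reordering of stages, or because we only need local behavior near $r=0$ where the Neumann series converges). Everything else is a direct sign-chase on polynomial coefficients; since the result is already in the literature, I would present this as a brief sketch and cite \cite{spijker2007} for the full details.
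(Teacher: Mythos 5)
First, note that the paper does not actually prove this lemma: it is quoted directly from \cite[Theorem~2.2(i)]{spijker2007}, so the only ``proof'' in the paper is the citation, and your reconstruction must be judged on its own. The ``if'' direction of your sketch is essentially sound: once $\mS,\mT\ge 0$, the incidence bounds \eqref{sspcc3}--\eqref{sspcc4} do propagate by induction to $\textup{Inc}(\mT^k)\le\textup{Inc}(\mT)$ and $\textup{Inc}(\mT^k\mS)\le\textup{Inc}(\mS)$ for all $k$, and the lowest-order-term analysis of the Neumann series then yields $\mP(r)\ge 0$ and $\mR(r)\ge 0$ for all sufficiently small $r>0$, hence $\sspcoef(\mS,\mT)>0$. (One inaccuracy: the Neumann series is \emph{not} finite in general --- $\mT$ is nilpotent only for explicit methods, while the lemma is stated for arbitrary $\mS,\mT$ --- but your fallback of working only near $r=0$, where the series converges, suffices.)

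The genuine gap is in the ``only if'' direction. By the paper's definition, $\sspcoef(\mS,\mT)>0$ gives you a \emph{single} value $r_0>0$ at which $\mP(r_0)\ge 0$ and $\mR(r_0)\ge 0$; your argument instead requires these inequalities to hold for \emph{arbitrarily small} $r>0$, since every conclusion ($\mT\ge 0$ from $\mP(r)/r\to\mT$, $\mS\ge0$ from $\mR(r)\to\mS$, and the sign of the leading nonvanishing term in $\mP_{ij}$ and $\mR_{ij}$) is extracted in the limit $r\to 0^+$. The passage from ``some $r_0$'' to ``all $r\in(0,r_0]$'' is precisely the statement that the feasible set $\{r\ge 0: \mP(r)\ge 0,\ \mR(r)\ge 0\}$ is an interval containing $0$, which is a nontrivial lemma of the Kraaijevanger--Spijker theory in its own right. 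It can be derived from the resolvent-type identity
\[
\mP(r_1)=\tfrac{r_1}{r_2}\bigl[\mI-\bigl(1-\tfrac{r_1}{r_2}\bigr)\mP(r_2)\bigr]^{-1}\mP(r_2),
\qquad 0<r_1<r_2,
\]
together with an argument that the bracketed matrix has a nonnegative inverse (immediate when $\mT$ is nilpotent, since then $\mP(r_2)$ is nilpotent and the Neumann series is finite, but requiring a spectral-radius argument in the general case). As written, your proof silently assumes this monotonicity property and is therefore incomplete; you should either supply the interval lemma explicitly or simply cite \cite{spijker2007} for the full statement, as the paper does.
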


To apply Lemma~\ref{thm:spijker_coef}, it is necessary to write
a TSRK method in HS-irreducible form.
A trivial type of HS-reducibility is the case where two
stages $y^n_i$, $y^n_j$ are identically equal; i.e., where the following 
condition holds for some $i\ne j$:
\be \label{stageequality}
d_i = d_j, \ \ \mbox{ rows $i,j$ of $\mA$ are identical, and rows $i,j$ of
$\mAh$ are identical}
\ee
This type of reducibility can be dealt with by simply combining the two stages;
hence in the following theorem we assume any such stages have been eliminated 
already.
Combining Lemma~\ref{thm:spijker_coef} and Lemma~\ref{lem:inc}, we
find that all SSP TSRK methods can be represented as Type I and Type II methods, 
introduced in Section~\ref{sec:intro}.

\begin{thm} \label{thm:allssptsrk}
Let $\mS,\mT$ be the coefficients of a $s$-stage TSRK method \eqref{tsrk_JT}
in the form \eqref{tsrk_spijker} with positive SSP coefficient $\sspcoef>0$
such that \eqref{stageequality} does not hold for any $i\ne j$.
Then the method can be written as an $s$-stage HS-irreducible method of Type 
I or Type II.
\end{thm}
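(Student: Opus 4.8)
The plan is to apply Lemma~\ref{lem:inc} to the Spijker representation \eqref{tsrk_spijker} of the TSRK method and extract structural consequences from the incidence conditions \eqref{sspcc3}--\eqref{sspcc4}. First I would note that since $\sspcoef>0$, Lemma~\ref{lem:inc} forces $\mS\ge0$ and $\mT\ge0$; in particular $\mAh\ge0$, $\vbh\ge0$, $\mA\ge0$, $\vb\ge0$, and $0\le d_i\le 1$, $0\le\theta\le1$. The real work is to show that the presence of any nonzero entry in $\mAh$ or $\vbh$ forces, via \eqref{sspcc4} (i.e. $\Inc(\mT^2)\le\Inc(\mT)$) together with \eqref{sspcc3} and the explicit block structure of $\mS$ and $\mT$ in \eqref{tsrk_spijker}, that the corresponding stage $y_i^n$ (or the output $u^{n+1}$) actually only depends on $F(u^{n-1})$ — not on any genuinely ``previous-step'' stage $F(y_j^{n-1})$ with $j$ such that $y_j^{n-1}\ne u^{n-1}$.

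The key steps, in order: (1) Write out $\mT^2$ and $\mT\mS$ blockwise using \eqref{tsrk_spijker}. The crucial observation is that the first $s$ rows of $\mT$ are identically zero (these are the rows corresponding to the stage components $y_j^{n-1}$, which are pure input values carried over), so in $\mT^2 = \mT\cdot\mT$ the contribution of the columns of the first $\mT$ that multiply against the block $\mAh$ vanishes. (2) Conclude from $\Inc(\mT^2)\le\Inc(\mT)$ that wherever $\mT$ has a zero block, $\mT^2$ must too; matching this against the actual sparsity of $\mT^2$ shows that the rows of $\mAh$ can only be nonzero in a column $j$ for which row $j$ of $\mT$ is zero — i.e. only the columns corresponding to $y_j^{n-1}$ that are never used to compute any $F$. (3) Translate this back: if row $i$ of $\mAh$ is nonzero, it can involve $F(y_j^{n-1})$ only when $y_j^{n-1}$ is itself (by an inductive/irreducibility argument) a convex combination with no $F$-dependence, hence equal to a constant-in-time input — and consistency \eqref{eq:Ssum} pins that input to be $u^{n-1}$. (4) Similarly analyze $\Inc(\mT\mS)\le\Inc(\mS)$ to handle the interaction with the input block and to force any stage feeding into $\mAh$ to coincide with $u^n$ or $u^{n-1}$. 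Assembling: either \emph{no} stage equals $u^n$, in which case all $\hat a_{ij},\hat b_j$ must vanish and we are in the Type~I form \eqref{tsrk_b}; or some stage equals $u^n$ (after relabeling, $y_1^n$), in which case the only surviving ``hat'' coefficients are those multiplying $F(u^{n-1})=F(y_1^{n-1})$, giving exactly the Type~II form \eqref{tsrk_c} with $\mAh$ collapsing to the vector $\vah$. Finally, invoke the hypothesis that \eqref{stageequality} fails to rule out the trivial HS-reducibility, and note that no other HS-reducibility can occur without either reducing the stage count (contradicting minimality as assumed) or collapsing to one of the two normal forms, so the resulting representation is HS-irreducible.

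The main obstacle I anticipate is step (3): carefully arguing that a stage $y_j^{n-1}$ appearing inside an $\mAh$-term must be ``trivial'' (no $F$-dependence, hence a fixed input). The incidence conditions constrain sparsity patterns but do not by themselves say a stage is a pure input; one needs to iterate the argument — if $y_j^{n-1}$ did depend on some $F(y_k^{n-1})$, then by \eqref{sspcc4} applied along that chain one eventually produces an entry of $\mT^2$ outside the support of $\mT$, since the two-step window means the chain of $F$-dependencies into the previous step cannot be ``closed'' — it would require data from step $n-2$. Making this chain-chasing argument rigorous (essentially: the directed graph of $F$-dependencies restricted to previous-step stages must be acyclic \emph{and} must terminate at genuine inputs, and $\Inc(\mT^2)\le\Inc(\mT)$ forbids paths of length~$2$ that leave the support) is the technical heart. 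Once that structural fact is established, identifying the input as $u^{n-1}$ via \eqref{eq:Ssum}, and the final sorting into Type~I versus Type~II, is bookkeeping on the block structure of \eqref{tsrk_spijker}.
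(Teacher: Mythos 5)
There is a genuine gap, and it sits exactly where you predicted: your step (3). But the underlying problem is that you are leaning on the wrong incidence condition. In the Spijker form \eqref{tsrk_spijker} the previous-step stages $y_j^{n-1}$ are pure \emph{inputs}: the corresponding rows of $\mT$ are identically zero, so $\Inc(\mT^2)\le\Inc(\mT)$ tells you essentially nothing about $\mAh$. Indeed, the stage block-row of $\mT^2$ works out to $(\mA\mAh,\ \mzero,\ \mA^2,\ \mzero)$, so \eqref{sspcc4} only yields $\Inc(\mA\mAh)\le\Inc(\mAh)$ and $\Inc(\mA^2)\le\Inc(\mA)$ --- no vanishing of $\mAh$ follows. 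There is also no ``chain of $F$-dependencies into the previous step'' to chase: those dependencies live in the previous step's update map, not in the current step's $\mT$, so the acyclicity argument you sketch has nothing to act on. The condition that actually kills $\mAh$ is \eqref{sspcc3}, and it does so in one line: the stage rows of $\mT\mS$ are $[\mA\vd,\ \mAh,\ \mA(\ve-\vd)]$ while the corresponding rows of $\mS$ are $[\vd,\ \mzero,\ \ve-\vd]$; since the middle block of $\mS$ is zero, $\Inc(\mT\mS)\le\Inc(\mS)$ forces $\mAh=\mzero$ (and likewise $\vbh=\mzero$). That is the paper's entire argument for the Type I case.

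The second gap is logical rather than computational. Lemma~\ref{lem:inc} characterizes $\sspcoef(\mS,\mT)>0$, but your hypothesis is only that the \emph{method} has $\sspcoef>0$; the bridge between the two is Lemma~\ref{thm:spijker_coef}, which requires the representation at hand to be HS-irreducible. When some $y_j^n=u^n$, the representation \eqref{tsrk_spijker} is HS-reducible (the row for $y_1^n$ duplicates the row for $u^n$, and $y_1^{n-1}$ duplicates $u^{n-1}$), so you are not entitled to apply Lemma~\ref{lem:inc} to it at all. The paper handles this by first passing to the reduced representation \eqref{tsrk_spijker_reduced2}, which merges $y_1^{n-1}$ with the input $u^{n-1}$; applying \eqref{sspcc3} to \emph{that} representation forces $\mAh_{2:s}=\vbh\transpose_{2:s}=\mzero$ but leaves $\mAh_1$ and $\hat b_1$ free, precisely because the column of the reduced $\mS$ corresponding to the input $u^{n-1}$ is $\vd$ and may be nonzero. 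That --- not the consistency condition \eqref{eq:Ssum} --- is why exactly the coefficients multiplying $F(u^{n-1})$ survive, yielding Type II. Your case split (no stage equals $u^n$ versus some stage does) is the right one and matches the paper, but both halves need these two repairs before the argument closes.
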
 
\begin{proof}
Consider a method satisfying the stated assumptions.  Examination of $\mS,\mT$ 
reveals that the method is HS-irreducible in form \eqref{tsrk_spijker}
if there is no $y_j$ identically equal to $u^n$.
In this case, we can apply Lemma~\ref{thm:spijker_coef} 
to obtain that $\sspcoef(\mS,\mT)>0$.
Then condition \eqref{sspcc3} of Lemma~\ref{lem:inc} implies that
$\mAh=\vbh=\mzero$.
Under this restriction, methods of the form \eqref{tsrk_JT} simplify to
Type I methods \eqref{tsrk_b}.

Now consider the case in which $y_j=u^n$ for some $j$.
If necessary, reorder the stages so that $y_1^n=u^n$.  Then rows 
$s+1$ and $s+2$ of $[\mS \ \mT]$ in the
representation \eqref{tsrk_spijker} are equal.
Thus we can rewrite the method
in form (noting that also $y_1^{n-1}=u^{n-1}$)
as follows:
Set $m=2s+1$, $l=s+1$,  $J=\{s,s+1,s+2,\dots,2s+2\}$, and
\begin{subequations} \label{tsrk_spijker_reduced2}
\begin{align}
\bx^n & = \left(u^{n-1},y_2^{n-1}, \dots, y_s^{n-1}, u^{n} \right)\transpose, \\
\bw^n & =\left(u^{n-1},y_2^{n-1},y_3^{n-1}, \dots, y^{n-1}_s,y_1^n,y_2^n, \dots, y^n_s, u^{n+1} \right)\transpose, \\
 \mS & = \begin{pmatrix}
1 & \mzero & 0 \\ \mzero & \mI & \mzero \\  \vd & \mzero & \ve-\vd \\ \theta & \mzero & 1-\theta \end{pmatrix},\ \ \ \ \ \
\mT = \begin{pmatrix}
\mzero & \mzero & \mzero & \mzero \\
\mzero & 0 & \mzero & 0 \\
\mAh_1 & \mAh_{2:s} & \mA   & \mzero \\
\bh_1 & \vbh\transpose_{2:s} & \vb\transpose & 0
\end{pmatrix}.
\end{align}
\end{subequations}
Here $\mAh_1$ and $\bh_1$ represent the first column and first
element of $\mAh$ and $\vbh$, respectively, while
$\mAh_{2:s}$ and $\vbh\transpose_{2:s}$ represent the remaining
columns and remaining entries.  Since \eqref{tsrk_spijker_reduced2}
is HS-irreducible, we can apply Lemma~\ref{thm:spijker_coef}.
Then we have that $\sspcoef(\mS,\mT)>0$,
so that Lemma~\ref{lem:inc} applies.
Applying condition \eqref{sspcc3} of
Lemma~\ref{lem:inc} to the representation 
\eqref{tsrk_spijker_reduced2}, we find that $\mAh_{2:s}$ and 
$\vbh\transpose_{2:s}$ must vanish, but $\mAh_1$ and $\hat{b}_1$
may be non-zero.  The resulting methods are HS-irreducible Type II methods 
\eqref{tsrk_c}.
\end{proof}

\begin{rmk} \label{rmk2}
  One could formulate a version of Theorem~\ref{thm:allssptsrk} with
  the hypothesis that the method in form \eqref{tsrk_JT} is
  HS-irreducible.
  This would lead only to the class of Type I methods, and in the
  resulting formalism the number of stages for Type II methods would
  be artificially increased by one (see Remark~\ref{rmk1}).  The
  advantage of explicitly considering the case $y^n_j=u^n$ is that we
  obtain Type II methods with the number of stages that accurately
  represents their cost.
\end{rmk}


We now introduce a compact, unified notation for Type I and Type II methods.  
This form is convenient for expressing the order conditions and restrictions
on the coefficients.  First we rewrite
an $s$-stage Type II method \eqref{tsrk_c} by including $u^{n-1}$ as one of 
the stages:
\begin{align*}
y_0^n &= u^{n-1}, \\
y_1^n &= u^{n}, \\
y_i^n &= d_i u^{n-1} + (1-d_i) u^n 
 + \Dt\sum_{j=0}^{s} a_{ij} F(y_j^n), &  2 \le i \le s,\\
u^{n+1} &= \theta u^{n-1} + (1-\theta) u^n 
 + \Dt\sum_{j=0}^{s} b_j F(y_j^n).
\end{align*}
Then both Type I and Type II methods can be written in 
the compact form
\begin{subequations} \label{tsrk_compact}
\begin{align}
\vy^n & = \bar{\vd} u^{n-1} + (\bone-\bar{\vd})u^n + \Dt \bar{\mA} \bff^n, \\
u^{n+1} & = \theta u^{n-1} + (1-\theta) u^n + \Dt \bar{\vb}\transpose \bff^n,
\end{align}
\end{subequations}
where, for Type I methods the coefficients with bars are equal to the 
corresponding coefficients without bars in \eqref{tsrk_b} and
\begin{align*}
\vy^n & = [y_1^n,\dots,y_s^n]\transpose, \ \ \bff^n = [F(y_1^n),\dots,F(y_s^n)]\transpose.
\end{align*}
Meanwhile, for Type II methods
\begin{align*}
\vy^n & = [u^{n-1},u^n,y_2^n,\dots,y_s^n]\transpose, \ \ \bff^n = [F(u^{n-1}),F(u^n),F(y_2^n),\dots,F(y_s^n)]\transpose, \\ 
\bar{\vd} & = [1,0,d_2,\dots,d_s]\transpose,
\ \ \bar{\vb} = [\hat{b}_1 \ \vb\transpose]\transpose, \ \ 
\bar{\mA}=\begin{pmatrix}0 & \mzero \\ \vah & \mA \end{pmatrix},
\end{align*}
where $d_j,\bb,\mA,\hat{b}_1,\vah$ refer to the coefficients in \eqref{tsrk_c}.

It is known that irreducible strong stability preserving Runge--Kutta methods
have positive stage coefficients, $a_{ij}\ge0$ and strictly positive weights,
$b_j>0$.  The following theorem shows that similar properties hold for SSP TSRK
methods.  The theorem and its proof are very similar to 
\cite[Theorem~4.2]{kraaijevanger1991}.
In the proof, we will use a
second irreducibility concept.  A method is said to be {\em DJ-reducible}
if it involves one or more stages whose value does not affect the output.
If a TSRK method is neither HS-reducible nor DJ-reducible, we say it is
{\em irreducible}.

\begin{thm} \label{thm:pos_coef}
The coefficients of an HS-irreducible TSRK method of Type I \eqref{tsrk_b} or 
Type II \eqref{tsrk_c}
with positive SSP coefficient satisfy the following bounds:
\begin{align} \label{condition_i}
\bar{\mA}\ge0,\bar{\vb}\ge0, 0\le\bar{\vd}\le1, \textup{ and } 0\le\theta\le1.
\end{align}
Furthermore, if the method is also DJ-irreducible, the weights must be
strictly positive: 
\begin{align} \label{condition_ii}\bar{\vb}>0.\end{align}
All of these inequalities should be interpreted component-wise.
\end{thm}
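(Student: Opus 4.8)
The plan is to push everything through the Spijker form $(\mS,\mT)$ of the method, which is written out explicitly for Type~I and Type~II methods in Section~\ref{sect:sspglms}, and to extract each inequality from the sign and incidence structure supplied by Lemmas~\ref{thm:spijker_coef} and~\ref{lem:inc}. Since the method is HS-irreducible, Lemma~\ref{thm:spijker_coef} gives $\sspcoef=\sspcoef(\mS,\mT)$, so the hypothesis $\sspcoef>0$ forces $\sspcoef(\mS,\mT)>0$; Lemma~\ref{lem:inc} then applies, and in particular $\mS\ge0$, $\mT\ge0$ and $\Inc(\mT^2)\le\Inc(\mT)$ (conditions \eqref{sspcc1}, \eqref{sspcc2} and \eqref{sspcc4}).

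Granting this, \eqref{condition_i} is immediate. In both the Type~I and the Type~II Spijker form every row of $\mS$ is either a trivial row $(1,0)$ or $(0,1)$ (the rows carrying $u^{n-1}$ and $u^n$), a stage row $(\bar{d}_i,\,1-\bar{d}_i)$, or the output row $(\theta,\,1-\theta)$; hence $\mS\ge0$ is exactly $0\le\bar{\vd}\le1$ together with $0\le\theta\le1$. Likewise the only possibly nonzero entries of $\mT$ are those of $\bar{\mA}$ (the stage-to-stage couplings) and of $\bar{\vb}\transpose$ (the output row), so $\mT\ge0$ says exactly $\bar{\mA}\ge0$ and $\bar{\vb}\ge0$.

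The substance of the theorem is the strict bound \eqref{condition_ii}, for which I would argue by contradiction in the spirit of \cite[Theorem~4.2]{kraaijevanger1991}, using the incidence hypothesis \eqref{sspcc4} in place of a direct manipulation of absolute-monotonicity matrices. Suppose some component of $\bar{\vb}$ vanishes, and let $S$ be the set of indices $j$ with $\bar{b}_j=0$, so $S\ne\emptyset$. The output is produced in the last row of the Spijker form, whose $\mT$-entries are precisely the components of $\bar{\vb}$, while the stage rows of $\mT$ carry the entries of $\bar{\mA}$; consequently the last row of $\mT^2$, restricted to the stage columns, equals $\bar{\vb}\transpose\bar{\mA}$, whose $k$th entry is $\sum_j \bar{b}_j\bar{a}_{jk}$. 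Condition \eqref{sspcc4} applied to this row gives $\sum_j \bar{b}_j\bar{a}_{jk}\ne0\Rightarrow\bar{b}_k\ne0$, and since all $\bar{b}_j,\bar{a}_{jk}$ are nonnegative this forces $\bar{b}_j\bar{a}_{jk}=0$ for every $j$ whenever $k\in S$, hence $\bar{a}_{jk}=0$ for all $j\notin S$ and all $k\in S$. Thus the stages indexed by $S$ are referenced neither by the output ($\bar{b}_k=0$) nor by any stage outside $S$ ($\bar{a}_{jk}=0$), so this whole block of stages may be removed without changing $u^{n+1}$; i.e.\ the method is DJ-reducible, contradicting the hypothesis. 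Therefore $S=\emptyset$ and $\bar{\vb}>0$.

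The main obstacle is not the linear algebra but the reducibility bookkeeping that frames it --- the same delicate point already encountered in Theorem~\ref{thm:allssptsrk}. One has to check that HS-irreducibility of the TSRK method in its Type~I or Type~II form \eqref{tsrk_b}--\eqref{tsrk_c} is inherited by the Spijker representation used in Lemma~\ref{thm:spijker_coef}; and, for Type~II methods, one must handle the fact that $u^{n-1}$ and $u^n$ appear as the stages $y_0$ and $y_1$ in the compact notation \eqref{tsrk_compact}, so that a vanishing weight on $F(u^{n-1})$ or on $F(u^n)$ must be excluded as well --- which the same incidence computation does, once more by producing a DJ-reduction. Everything else reduces to inspecting the matrices displayed in Section~\ref{sect:sspglms}.
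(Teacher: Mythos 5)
Your proposal is correct and follows essentially the same route as the paper: HS-irreducibility plus Lemma~\ref{thm:spijker_coef} gives $\sspcoef(\mS,\mT)>0$, conditions \eqref{sspcc1}--\eqref{sspcc2} of Lemma~\ref{lem:inc} yield \eqref{condition_i}, and condition \eqref{sspcc4} applied to the output row of $\mT^2$ gives $\sum_j \bar{b}_j\bar{a}_{jk}=0$ whenever $\bar{b}_k=0$, which by nonnegativity forces the zero-weight stages to form a DJ-reducible block. The paper's proof is the same partition argument you give, just phrased with the sets $\Sop_1,\Sop_2$ instead of your $S$ and its complement.
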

\begin{proof}
Application of Lemma \ref{thm:spijker_coef} implies that $\sspcoef(\mS,\mT)>0$.
Therefore Lemma~\ref{lem:inc} applies.
The first result \eqref{condition_i} then follows from conditions \eqref{sspcc1}
and \eqref{sspcc2} of Lemma~\ref{lem:inc}.
To prove the second part, observe that condition \eqref{sspcc4} of
Lemma~\ref{lem:inc} means that if $b_j=0$ for some $j$ then
\be \label{eq:beqz} \sum_i b_i a_{ij} = 0.\ee
Since $\mA,\vb$ are non-negative, \eqref{eq:beqz} implies that 
either $b_i$ or $a_{ij}$ is zero for each value of $i$.
Now partition the set $\Sop=\{1,2,\dots,s\}$ into $\Sop_1,\Sop_2$
such that $b_j>0$ for all $j\in\Sop_1$ and $b_j=0$ for all $j\in\Sop_2$.
Then $a_{ij}=0$ for all $i\in\Sop_1$ and $j\in\Sop_2$.
This implies that the method is DJ-reducible, unless $\Sop_2=\emptyset$.
\end{proof}



\section{Order Conditions and a Barrier}
  Order conditions for TSRK methods up to order 6 have previously been derived in
\cite{jackiewicz1995}.  However, two of the sixth-order conditions therein
appear to contain errors (they do not make sense dimensionally).
Alternative approaches to order conditions for TSRK methods, using trees
and B-series, have also been identified \cite{butcher1997,hairer1997}.


In this section we derive order conditions for TSRK methods of Types I
and II.  The order conditions derived here are not valid for 
for the general class of methods given by \eqref{tsrk_JT}.
Our derivation follows Albrecht's approach \cite{albrecht1996},
and leads to very simple conditions, which are almost identical in
appearance to 
order conditions for RK methods.  For simplicity of notation, we 
consider a scalar ODE only.
For more details and justification of this approach for systems,
see \cite{albrecht1996}.

\subsection{Derivation of Order Conditions}
When applied to the trivial ODE $u'(t)=0$, any TSRK scheme reduces to
the recurrence $u^{n+1} = \theta u^{n-1} + (1-\theta)u^{n}$.  For a
SSP TSRK scheme, we have $0\le \theta \le 1$ (by
Theorem~\ref{thm:pos_coef}) and it follows that the method is
zero-stable. 
Hence to prove convergence of order $p$, it is sufficient to prove
consistency of order $p$ (see, e.g., \cite[Theorem~2.3.4]{jackiewicz2009}).

Let $\tu(t)$ denote the exact solution at time $t$ and define
\begin{align*} \bty^n=[\tu(t_n+c_1\Dt),\dots,\tu(t_n+c_s\Dt)] \\
\btf^n=[F(\tu(t_n+c_1\Dt)),\dots,F(\tu(t_n+c_s\Dt))],
\end{align*}
where $\v{c}$ identifies the abscissae of the TSRK scheme.  These
represent the correct stage values and the corresponding correct
values of $F$.  Then the \emph{truncation error} $\lte^n$ and
\emph{stage truncation errors} $\ste^n$ are implicitly defined by
\begin{subequations}\label{eq:tsrk_lte}
\begin{align}
\bty^n & = \bar{\vd} \tu^{n-1} + (\ve-\bar{\vd})\tu^n + \Dt \mAb \btf^n + \Dt \ste^n, \\
\tu(t_{n+1}) & = \theta\tu^{n-1} + (1-\theta)\tu^{n} + \Dt \bbb\transpose\btf^n + \Dt \lte^n.
\end{align}
\end{subequations}

To find formulas for the truncation errors, we make use of
the Taylor expansions
\begin{align*}
\tu(t_{n}+c_i\Dt)
& = \sum_{k=0}^\infty \frac{1}{k!} \Dt^k c_i^k \tu^{(k)}(t_{n}), \\
F(\tu(t_{n}+c_i\Dt)) = \tu'(t_{n}+c_i\Dt)
& = \sum_{k=1}^\infty \frac{1}{(k-1)!} \Dt^{k-1} c_i^{k-1} \tu^{(k)}(t_{n}), \\
\tu(t_{n-1})
& = \sum_{k=0}^\infty \frac{(-\Dt)^k}{k!} \tu^{(k)}(t_{n}).
\end{align*}
Substitution gives
\begin{subequations} \label{eq:tsrk_ste}
\begin{align}
\ste^n & = \sum_{k=1}^\infty \ste_k \Dt^{k-1} \tu^{(k)}(t_{n}), \\
\lte^n & = \sum_{k=1}^\infty \lte_k \Dt^{k-1} \tu^{(k)}(t_{n}),
\end{align}
\end{subequations}
where
\begin{align*} 
\ste_k & = \frac{1}{k!} \left(\bc^k-(-1)^k\bar{\vd}\right) - \frac{1}{(k-1)!} \mAb \bc^{k-1}, \\
\lte_k & = \frac{1}{k!}\left(1-(-1)^k\theta\right) - \frac{1}{(k-1)!} \bbb\transpose \bc^{k-1}.
\end{align*}
Subtracting \eqref{eq:tsrk_lte} from \eqref{tsrk_compact} gives
\begin{subequations} \label{eq:tsrk_gserr_series}
\begin{align}
\gserr^n & = \bar{\vd} \gerr^{n-1} + (\ve-\bar{\vd})\gerr^{n} + \Dt \mAb \rhsserr^n - \Dt \ste^n, \\
\gerr^{n+1} & = \theta \gerr^{n-1} + (1-\theta)\gerr^{n} + \Dt \bbb\transpose \rhsserr^n - \Dt \lte^n,
\end{align}
\end{subequations}
where $\gerr^{n+1}=u^{n+1}-\tu(t_{n+1})$ is the global error,
$\gserr^n = \by^n-\tilde{\by^n}$, is the global stage error, and
$\rhsserr^n = \bff^n-\btf^n$ is the right-hand-side stage error.

If we assume an expansion for the right-hand-side stage errors
$\rhsserr^n$ as a power series in $\Dt$
\begin{align} \label{eq:rhserr_series}
\rhsserr^n & = \sum_{k=0}^{p-1} \rhsserr_k^n \Dt^{k} + \Oop(\Dt^{p}), 
\end{align}
then 
substituting the expansions \eqref{eq:rhserr_series} and \eqref{eq:tsrk_ste}
into the global error formula \eqref{eq:tsrk_gserr_series} yields
\begin{subequations} \label{eq:gserr_rec_both}
\begin{align} \label{eq:gserr_rec}
\gserr^n & = \bar{\vd} \gerr^{n-1} + (\ve - \bar{\vd})\gerr^n + \sum_{k=0}^{p-1} \mAb \rhsserr^n_k \Dt^{k+1} -\sum_{k=1}^{p} \ste_k \tu^{(k)}(t_{n}) \Dt^k + \Oop(\Dt^{p+1}), \\
\gerr^{n+1} & = \theta \gerr^{n-1} + (1-\theta)\gerr^n + \sum_{k=0}^{p-1} \bbb\transpose\rhsserr^n_{k} \Dt^{k+1} -\sum_{k=1}^{p} \lte_k \tu^{(k)}(t_{n}) \Dt^k + \Oop(\Dt^{p+1}).
\end{align}
\end{subequations}
Hence
we find the method is consistent of order $p$ if
\be \label{eq:ocs}
\bbb\transpose\rhsserr_k^n=0 \ \ (0\le k\le p-1) \textup{ \qquad and \qquad } \lte_k=0 \ \ (1\le k\le p).
\ee

It remains to determine the vectors 
$\rhsserr_k^n$ in the expansion \eqref{eq:rhserr_series}.  In fact, 
we can relate these recursively to the $\gserr_k$.  First we define
\begin{align*}
\bt_n & = t_n\ve+\bc\Dt, \\
\bFf(\by,\bt) & = [F(y_1(t_1)),\dots,F(y_s(t_s))]\transpose.
\end{align*}
Then we have the Taylor series
\begin{align*}
  \bff^n =  \bFf(\by^n,\bt_n) & = \btf^n + \sum_{j=1}^\infty \frac{1}{j!} (\by^n-\bty^n)^j \cdot
        \bFf^{(j)}(\bty^n,\bt_n) \\
       & = \btf^n + \sum_{j=1}^\infty \frac{1}{j!} (\gserr^n)^j \cdot \bg_j(\bt_n),
\end{align*}
where 
\begin{align*}
\bFf^{(j)}(\by,\bt) = [F^{(j)}(y_1(t_1)),\dots,F^{(j)}(y_s(t_s))]\transpose, \\
\bg_j(\bt) = [F^{(j)}(y(t_1)),\dots,F^{(j)}(y(t_s))]\transpose,
\end{align*}
and the dot product denotes component-wise multiplication. Thus
\begin{align*}
  \rhsserr^n = \bff^n - \btf^n = \sum_{j=1}^\infty \frac{1}{j!} (\gserr^n)^j \cdot
        \bg_j(t_n\ve+\bc\Dt).
\end{align*}
Since
\begin{align*}
\bg_j(t_n\ve+\bc) & = \sum_{l=0}^\infty \frac{\Dt^l}{l!} \mC^l \bg_j^{(l)}(t_{n}),
\end{align*}
where $\mC=\textup{diag}(\bc)$, we finally obtain the desired expansion:
\begin{align} \label{eq:triplesum}
\rhsserr^n & = \sum_{j=1}^\infty \sum_{l=0}^\infty \frac{\Dt^l}{j!l!} \mC^l (\gserr^n)^j \cdot \bg_j^{(l)}(t_{n}).
\end{align}
To determine the coefficients $\rhsserr_k$, we alternate recursively 
between \eqref{eq:triplesum} and \eqref{eq:gserr_rec}.
Typically, the abscissae $\vc$ are chosen as $\vc=\mAb\ve$ so that $\ste_1=0$.
With these choices, we collect the terms relevant for up to fifth-order accuracy: \\
\begin{tabular}{|cl|} \hline
 Terms appearing in  $\rhsserr_1$: &$\emptyset$\\
Terms appearing in  $\gserr_2$: & $\ste_2$ \\
Terms appearing in   $\rhsserr_2$: & $\ste_2$ \\
Terms appearing in  $\gserr_3$: & $\mAb\ste_2,\ste_3$ \\
Terms appearing in  $\rhsserr_3$: &$\mC\ste_2, \mAb\ste_2, \ste_3$ \\
Terms appearing in  $\gserr_4$:  & $\mAb \mC\ste_2, \mAb^2\ste_2, \mAb\ste_3,\ste_4$ \\
Terms appearing in  $\rhsserr_4$:  & $\mAb \mC\ste_2, \mAb^2\ste_2, \mAb\ste_3,\ste_4, \mC\mAb\ste_2, \mC\ste_3, \mC^2\ste_2, \ste_2^2$ \\ \hline
 \end{tabular}
 
The order conditions are then given by \eqref{eq:ocs}.
In fact, we are left with order conditions identical to those
for Runge--Kutta methods, except that the definitions of the stage truncation
errors $\ste_k, \lte_k,$ and of the abscissas $\bc$ are modified.
For a list of the order conditions up to eighth order, see 
\cite[Appendix~A]{albrecht1987}.

\subsection{Order and Stage Order of TSRK Methods}
The presence of the term $\ste_2^2$ in $\rhsserr_4$ leads to the
order condition $\vb\transpose\ste_2^2=0$.  For SSP methods, since
$\vb>0$ (by Theorem \ref{thm:pos_coef}), this implies that $\ste_2^2=0$, 
i.e. fifth-order SSP TSRK methods
must have stage order of at least two.  The corresponding condition for 
Runge--Kutta methods leads to the well-known result that no explicit RK
method can have order greater than four and a positive SSP coefficient.  
Similarly, the 
conditions for seventh order will include $\vb\transpose\ste_3^2=0$, which
leads (together with the non-negativity of $\mA$) to the result that
implicit SSP RK methods have order at most six.  In general, the
conditions for order $2k+1$ will include the condition
$\vb\transpose\ste_k^2=0$.
Thus, like SSP Runge-Kutta methods, SSP TSRK methods
have a lower bound on the stage order, and an upper bound on
the overall order.

\begin{thm} \label{thm:min_so}
Any TSRK method \eqref{tsrk_JT} of order $p$ with positive
SSP coefficient has stage order at least $\lfloor\frac{p-1}{2}\rfloor.$
\end{thm}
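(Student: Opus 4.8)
The plan is to argue by induction on the order $p$, mirroring the structure already visible in the order-condition analysis above. The key fact I would extract from the preceding derivation is that, for a method with $\vc=\mAb\ve$ (so that $\ste_1=0$), the truncation-error conditions \eqref{eq:ocs} decompose into \emph{stage} conditions and the \emph{solution} conditions $\lte_k=0$. What matters here is that among the order conditions for order $2k+1$ appears a condition of the form $\vb\transpose(\ste_k \cdot \ste_k)=0$, i.e. a weighted sum of squares of the components of $\ste_k$, arising from the $\ste_j^2$-type term in $\rhsserr_{2j}$ (exactly as the $\ste_2^2$ term in $\rhsserr_4$ was exhibited). Since a method of order $p$ with positive SSP coefficient is, by Theorem~\ref{thm:pos_coef}, irreducible-or-reducible to a method with $\bar{\vb}>0$ component-wise, the vanishing of $\vb\transpose(\ste_k\cdot\ste_k)$ forces $\ste_k=0$ whenever $2k+1\le p$.

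Concretely, I would proceed as follows. First, reduce to the case $\vc=\mAb\ve$: this is the standard normalization, and if $\ste_1\ne 0$ the method is only first-order, so the statement is vacuous for $p\le 2$ (where $\lfloor (p-1)/2\rfloor = 0$ and there is nothing to prove). Second, invoke Theorem~\ref{thm:pos_coef}: positivity of the SSP coefficient gives $\bar{\vb}\ge 0$, and after removing any DJ-reducible stages (which does not change the method's output or order) we get $\bar{\vb}>0$. Third, for each $k$ with $2\le k$ and $2k+1\le p$, identify the order condition $\bar{\vb}\transpose(\ste_k\cdot\ste_k)=0$; since every entry of $\bar{\vb}$ is strictly positive and each summand $\bar{b}_i(\ste_k)_i^2$ is nonnegative, conclude $(\ste_k)_i = 0$ for all $i$, i.e. $\ste_k = 0$. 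Together with $\ste_1=0$, this gives $\ste_k=0$ for all $k\le \lfloor (p-1)/2\rfloor$, which is precisely the statement that the stage order is at least $\lfloor (p-1)/2\rfloor$.

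The one genuine obstacle is justifying that the order condition $\bar{\vb}\transpose(\ste_k\cdot\ste_k)=0$ really is present among the conditions for order $2k+1$ — the excerpt only displays this explicitly for $k=2$ (via the $\ste_2^2$ entry of $\rhsserr_4$) and asserts the general pattern in prose. To close this, I would trace the recursion \eqref{eq:triplesum}–\eqref{eq:gserr_rec} one more step: the quadratic ($j=2$) term of \eqref{eq:triplesum} contributes $\tfrac12(\gserr^n)^2\cdot\bg_2(t_n)$ to $\rhsserr^n$, and since the leading term of $\gserr^n$ is $-\ste_k\tu^{(k)}(t_n)\Dt^k$ (using $\gerr^{n-1},\gerr^n = O(\Dt^p)$ and the vanishing of the lower $\ste_j$ by the induction hypothesis), the $\Dt^{2k}$-coefficient $\rhsserr_{2k}$ contains the term $\tfrac12(\ste_k)^2\cdot\bg_2$; feeding this into the solution order condition $\bar{\vb}\transpose\rhsserr_{2k}^n = 0$ from \eqref{eq:ocs} isolates $\bar{\vb}\transpose(\ste_k\cdot\ste_k)=0$ after separating the independent elementary differential $(\tu^{(k)})^2 F''(\tu)$. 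This is the same mechanism by which explicit SSP Runge--Kutta methods are capped at order four, and the argument is structurally identical; the only new bookkeeping is that $\ste_k$ and $\lte_k$ now carry the extra $(-1)^k\bar{\vd}$ and $(-1)^k\theta$ terms coming from the $u^{n-1}$ dependence, but these do not affect the positivity argument at all.
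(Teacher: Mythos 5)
Your proposal is correct and follows essentially the same route as the paper's proof: invoke Theorem~\ref{thm:pos_coef} to get strict positivity of the weights for an irreducible method, observe that the conditions for order $2k+1$ contain $\vb\transpose\ste_k^2=0$, and conclude $\ste_k=0$ since the left side is a positively weighted sum of squares. The only difference is that you explicitly trace the $j=2$ term of the recursion \eqref{eq:triplesum}--\eqref{eq:gserr_rec} to justify the presence of that quadratic condition, whereas the paper simply asserts it by reference to ``the procedure outlined above''; this is a welcome added detail, not a different argument.
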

\begin{proof}
Without loss of generality, we consider only irreducible methods; thus
we can apply Theorems \ref{thm:allssptsrk} and \ref{thm:pos_coef}.
Following the procedure outlined above, we find that for order $p$, the
coefficients must satisfy
$$ \vb\transpose\ste_k^2 = 0, \quad \quad k=1,2,\dots,\left\lfloor\frac{p-1}{2}\right\rfloor.$$
Since $\vb>0$ by Theorem~\ref{thm:pos_coef}, this implies that
$$ \ste_k^2 = 0, \quad \quad k=1,2,\dots,\left\lfloor\frac{p-1}{2}\right\rfloor.$$
\end{proof}

Application of Theorem~\ref{thm:min_so} dramatically simplifies the
order conditions for high order SSP TSRKs.  This is because increased
stage order leads to the vanishing of many of the order conditions.
Additionally, Theorem \ref{thm:min_so} leads to an upper bound on the
order of explicit SSP TSRKs.

\begin{thm}\label{thm:order_barrier}
The order of an explicit SSP TSRK method is at most eight.  Furthermore,
if the method has order greater than six, it is of Type II.  
\end{thm}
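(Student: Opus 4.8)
The plan is to mimic the classical proof that explicit SSP Runge--Kutta methods have order at most four, while exploiting the fact that a two-step method carries \emph{two} level values, $u^{n-1}$ and $u^n$; this permits up to two stages to be ``spent'' before a contradiction with irreducibility is reached, which is what moves the barrier from four to eight. Throughout I work with an irreducible method (reducing a reducible explicit method preserves both explicitness and the order), so that Theorems~\ref{thm:allssptsrk}, \ref{thm:pos_coef}, and \ref{thm:min_so} are available. Writing the method in the compact form \eqref{tsrk_compact}, explicitness means that, after a suitable reordering of the stages, $\mAb$ is strictly lower triangular. The key ingredient is Theorem~\ref{thm:min_so}: a method of order $p$ has stage order $q\ge\lfloor(p-1)/2\rfloor$, that is $\ste_k=0$ for $1\le k\le q$, which is the same as saying that the stage equations in \eqref{tsrk_compact} are satisfied exactly whenever the ODE solution is a polynomial in $t$ of degree at most $q$ (note that in a two-step method the input $u^{n-1}$ corresponds to abscissa $c=-1$ relative to $t_n$).

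The first step is to examine the earliest stage, whose row of $\mAb$ vanishes. Testing the stage equation against the monomials $1$, $(t-t_n)/\Dt$, and $((t-t_n)/\Dt)^2$, and using $q\ge2$, forces $c_1=-\bar{d}_1$ and $c_1^2=\bar{d}_1$, hence $\bar{d}_1\in\{0,1\}$ and the stage is identically $u^n$ (if $\bar{d}_1=0$) or $u^{n-1}$ (if $\bar{d}_1=1$). The second step examines the next stage, which by explicitness depends only on the first. Repeating the monomial test through degree three, in each of the two cases one obtains a cubic in $c_2$ whose roots all lie in $\{0,-1\}$; the root that would make the second stage coincide with the first is ruled out by irreducibility, so for $q\ge3$ the second stage must equal the \emph{other} member of $\{u^{n-1},u^n\}$. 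Consequently, any explicit SSP TSRK method with $q\ge3$ --- in particular any such method of order $p\ge7$ --- has a stage identically equal to $u^n$, so by the classification in Theorem~\ref{thm:allssptsrk} it is of Type~II; this establishes the second assertion.

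The third step examines the following stage, which now depends explicitly on two preceding stages, one equal to $u^n$ (abscissa $0$) and one equal to $u^{n-1}$ (abscissa $-1$). Carrying the monomial test through degree four and using $q\ge4$ yields a quartic in the new abscissa that factors as $c^2(c+1)^2=0$, so its only roots are $0$ and $-1$; in either branch the accompanying linear relations pin down the relevant entries of $\mAb$ and the corresponding component of $\bar{\vd}$ in such a way that the stage collapses onto one of the two earlier ones, contradicting irreducibility. (If instead this stage's own $\mAb$-row vanishes, the first-step argument already forces the same collapse.) Hence $q\le3$; combined with $q\ge\lfloor(p-1)/2\rfloor$ this gives $p\le8$.

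The step I expect to be the main obstacle is the stage-by-stage polynomial-exactness bookkeeping: setting up the monomial tests correctly for a two-step method, keeping the case distinctions straight, and checking in every branch that the degenerate root really does yield a reducible method, so that it may legitimately be discarded. Once the cubic for the second stage and the quartic $c^2(c+1)^2=0$ for the third are in hand, the conclusions are immediate.
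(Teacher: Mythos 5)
Your proposal is correct and follows essentially the same route as the paper: reduce to an irreducible method, invoke Theorem~\ref{thm:min_so} to force stage order at least three (resp.\ four), and solve the stage-order conditions for the earliest stages to show they must collapse onto $u^{n-1}$ or $u^n$, yielding Type~II for $p>6$ and a contradiction for $p=9$. The only cosmetic difference is in the final elimination: your quartic factors cleanly as $c^2(c+1)^2=0$, whereas the paper's elimination produces an additional spurious root $c_j=1$ that it discards via $d_j=5$ and the bound $0\le \bar{\vd}\le 1$ from Theorem~\ref{thm:pos_coef} --- both reach the same conclusion.
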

\begin{proof}
Without loss of generality, we consider only irreducible methods; thus
we can apply Theorems \ref{thm:pos_coef} and \ref{thm:min_so}.

To prove the second part, consider an explicit irreducible TSRK method with 
order greater
than six.  By Theorem~\ref{thm:min_so}, this method must have stage order
at least three.  Solving the conditions for stage $y_2$ to have stage order three
gives that $c_2$ must be equal to $-1$ or $0$.  
Taking $c_2=-1$ implies that $y_2=u^{n-1}$, so the method is of type II.
Taking $c_2=0$ implies $y_2=y_1=u^n$; in this case, there must be some
stage $y_j$ not equal to $u^n$ and we find that necessarily $c_j=-1$ and
hence $y_j=u^{n-1}$.

To prove the first part,
suppose there exists an irreducible SSP TSRK method \eqref{tsrk_compact} of 
order nine.
By Theorem \ref{thm:min_so}, this method must have stage order at least four.
Let $j$ be the index of the first stage that is not identically equal to
$u^{n-1}$ or $u^n$.
Solving the conditions for stage $y_j$ to have order four reveals that
$c_j$ must be equal to $-1,0$, or $1$.  The cases $c_j=-1$ and $c_j=0$
lead to $y_j=u^{n-1}$ and $y_j=u^n$, contradicting our assumption.
Taking $c_j=1$ leads to $d_j=5$.
By Theorem~\ref{thm:pos_coef}, this implies that the method is not SSP.
\end{proof}

We remark here that other results on the structure of SSP
TSRK methods may be obtained by similar use of the stage order conditions
and Theorems \ref{thm:pos_coef} and \ref{thm:min_so}.  We list some examples 
here, but omit the
proofs since these results are not essential to our present purpose.

\begin{enumerate}
  \item Any SSP TSRK method (implicit or explicit) of order greater than
        four must have a stage equal to $u^{n-1}$ or $u^n$.
  \item The abscissae $c_i$ of any SSP TSRK method of order greater than
        four must each be non-negative or equal to $-1$.
  \item (Implicit) SSP TSRK methods with $p>8$ must be of Type II.
\end{enumerate}

\section{Optimal SSP Two-step Runge--Kutta methods} 
  Our objective in this section is to find SSP TSRK methods that 
have the largest possible SSP coefficient.
A method of order $p$ with $s$ stages is said to be optimal if it has the largest
value of $\sspcoef$ over all TSRK methods with order at least $p$ with
no more than $s$ stages.

The methods presented were found via numerical search using \Matlab's
Optimization and Global Optimization toolboxes.  We searched over Type
I and Type II methods; the optimal methods found are of Type II in
every case.  For the methods of seventh and eighth order, this is known
{\em a priori} from Theorem \ref{thm:order_barrier}.  
Even for the lower order methods, 
this is not surprising, since explicit $s$-stage Type II methods
\eqref{tsrk_c} have an additional $s-1$ degrees of freedom compared to
explicit $s$-stage Type I methods \eqref{tsrk_b}.
Although we do not know in general if these methods are
globally optimal, our search recovered the global optimum in every
case for which it was already known.

\subsection{Formulating the Optimization Problem}
\label{sec:formulate_opt_prob}
The optimization problem is formulated using the theory of Section \ref{sect:sspglms}:
  \begin{gather*}   
    \max_{\mS, \mT} r,   \\
    \text{subject to} \quad \left\{
      \begin{aligned}
        (\mI+r\mT)^{-1}  \mS &\ge  0, \\
            (\mI+r\mT)^{-1}  \mT &\ge  0, \\
        \Phi_p(\mS,\mT)  &=  0,
      \end{aligned}\right.
  \end{gather*}
where the inequalities are understood component-wise and 
$\Phi_p(\mS,\mT)$ represents the order conditions up to order $p$.  This
formulation, solved numerically in \Matlab using a sequential quadratic
programming approach (\code{fmincon} in the optimization toolbox), was
used to find the methods given below.

In comparing methods with different numbers of stages, one is usually
interested in the relative time advancement per computational cost.
For this purpose, we define the {\em effective SSP coefficient}
\[ \ceff(\mS,\mT) = \frac{\sspcoef(\mS,\mT)}{s}.\]
This normalization enables us to compare the
cost of integration up to a given time, assuming that the time
step is chosen according to the SSP restriction.


\begin{rmk}
By virtue of Theorem \ref{thm:allssptsrk}, optimal SSP methods found in the
classes of Type I and Type II TSRK methods are in fact optimal over
the larger class of methods \eqref{tsrk_JT}.  Also, because they do
not use intermediate stages from previous timesteps, special
conditions on the starting method (important for methods of the form
\eqref{tsrk_JT} \cite{hairer1997,verner2006,verner2006b}) are
unnecessary.  Instead, the method can be started with any SSP Runge--Kutta
method of the appropriate order.
\end{rmk}

\begin{rmk}
The optimal Type II methods found here could be rewritten as Type I
methods by adding a stage (see Remark \ref{rmk1}).  However, this
would reduce their effective SSP coefficient and render them (apparently)
less efficient than some other (Type I) methods.  This indicates
once more the importance of explicitly accounting for the practical
difference between Type I and Type II methods.
\end{rmk}

\subsection{Low-storage implementation of Type II SSP TSRKs}
The form \eqref{canonical}, with $r= \sspcoef(\mS,\mT)$, typically 
yields very sparse coefficient matrices for optimal Type II SSP TSRK methods.
This form is useful for a low-storage implementation.
Written out explicitly, this form is:
\begin{subequations}
\label{tsrk_so}
\begin{align}
y_i^n &=  \tilde{d}_i u^{n-1}+\left(1-\tilde{d}_i-\sum_{j=0}^{s} q_{ij}\right)  u^n 
 + \sum_{j=0}^{s} q_{ij} \left( y^n_j + \frac{\Dt}{r} F(y_j^n) \right), \ \ (1\le i \le s), \\
u^{n+1} &=  \tilde{\theta} u^{n-1} 
 + \left(1-\tilde{\theta}-\sum_{j=0}^{s} \eta_{j}\right)u^n 
 + \sum_{j=0}^{s} \eta_{j} \left( y^n_j
 +\frac{\Dt}{r} F(y^n_j) \right),
\end{align}
\end{subequations}
where the coefficients are given by (using the relations \eqref{canonical2}):
\begin{align*}
 \mQ= r\mAb(\mI+r\mAb)^{-1}, && \mateta= r\bbb\transpose(\mI+r\mAb)^{-1}, \\
\tilde{\vd} = \bar{\vd}-\mQ \bar{\vd}, & & \tilde{\theta} = \theta-\mateta\transpose \bar{\vd}.
\end{align*}

When implemented in this form, many of the methods presented in the next
section have modest storage requirements, despite using large numbers of
stages.  The analogous form for Runge--Kutta methods was used in 
\cite{ketcheson2008}.

In the following sections we discuss the numerically optimal methods,
and in Tables~\ref{tbl-tsrk125}, \ref{tbl-tsrk126},
\ref{tbl-tsrk127}, and~\ref{tbl-tsrk128}, we give the coefficients in
the low-storage form \eqref{tsrk_so} for some numerically optimal
methods.

\subsection{Optimal Methods of Orders One to Four}

In the case of first-order methods, one can do no better
(in terms of effective SSP coefficient) than the forward Euler method.
For orders two to four, SSP coefficients of optimal methods found by
numerical search are listed in
Table~\ref{tbl-2step}.  We list these mainly for completeness, since
SSP Runge--Kutta methods with good properties exist up to order four.

In \cite{ketcheson2009a}, upper bounds for the values in Table
\ref{tbl-2step} are found by computing optimally contractive general
linear methods for linear systems of ODEs.  Comparing the present
results to the two-step results from that work, we see that this
upper bound is achieved (as expected) for all first and second order
methods, and even for the two- and three-stage third-order methods.

Optimal methods found in \cite{constantinescu2009} include two-step
general linear methods of up to fourth order using up to four stages.
By comparing Table~\ref{tbl-2step} with the results therein, we see that
the SSP coefficients of the optimal methods among the classes examined
in both works (namely, for $1\le s\le 4$, $2\le p\le 4$) agree.
The methods found in \cite{constantinescu2009} are produced by software
that guarantees global optimality.

All results listed in bold are thus known to be optimal because
they match those obtained in \cite{constantinescu2009},
\cite{ketcheson2009}, or both.  This demonstrates that
our numerical optimization approach was able to recover
all known globally optimal methods, and
suggests that the remaining methods found in the present work
may also be globally optimal.

The optimal $s$-stage, second-order SSP TSRK method is in fact both
a Type I and Type II method, and was found in numerical searches over
methods of both forms.
It has SSP coefficient $\sspcoef=\sqrt{s(s-1)}$ and nonzero coefficients
\begin{align*}
q_{i,i-1} &=  1, & (2\le i\le s), \\
q_{s+1,s} &=  2(\sspcoef-s+1),  \\
\tilde{\vd} &= \mzero, \\
 \tilde{\theta} &=  2(s-\sspcoef)-1.
\end{align*}
Note that these methods have $\ceff=\sqrt{\frac{s-1}{s}}$, whereas the
corresponding optimal Runge--Kutta methods have $\ceff=\frac{s-1}{s}$.
Using the low-storage assumption introduced in \cite{ketcheson2008}, these
methods can be implemented with just three storage registers, just one register
more than is required for the optimal second-order SSP Runge--Kutta methods.

The optimal nine-stage, third-order method is remarkable in that it
is a Runge--Kutta method.  In other words, allowing the freedom of using
an additional step does not improve the SSP coefficient in this case.

\begin{table}
\caption{Effective SSP coefficients $\ceff$ of optimal
  explicit 2-step Runge--Kutta methods of order two to four.
  Results known to be optimal from \cite{constantinescu2009} or 
  \cite{ketcheson2009} are shown in \textbf{bold}.
\label{tbl-2step}}
\center
\begin{tabular}{l|ccc} \hline
$s$ $\backslash$ $p$ & 2     & 3     & 4     \\ \hline
2                &\bf 0.707& \bf 0.366 & \\
3                &\bf 0.816& \bf 0.550 & \bf 0.286 \\
4                &\bf 0.866& \bf 0.578 & \bf 0.398 \\
5                &\bf 0.894 & 0.598 & 0.472 \\
6                &\bf 0.913 & 0.630 & 0.509 \\
7                &\bf 0.926 & 0.641 & 0.534 \\
8                &\bf 0.935 & 0.653 & 0.562 \\
9                &\bf 0.943 & 0.667 & 0.586 \\
10               &\bf 0.949 & 0.683 & 0.610 \\ \hline
\end{tabular}
\end{table}

\subsection{Optimal Methods of Orders Five to Eight}
Table \ref{tbl-2step_58} lists effective SSP coefficients of numerically
optimal TSRK methods of orders five to eight.  Although these methods
require many stages, it should be remembered that high-order (non-SSP) Runge--Kutta
methods also require many stages.  Indeed, some of our SSP TSRK
methods have fewer stages than the minimum number required to achieve
the corresponding order for an Runge--Kutta method (regardless of SSP
considerations).

The fifth-order methods present an unusual phenomenon: when the number
of stages is allowed to be greater than eight, it is not 
possible to achieve a larger effective SSP coefficient than the
optimal 8-stage method, even allowing as many as twelve stages.
This appears to be accurate, and not simply due to failure of
the numerical optimizer, since in the nine-stage case the optimization
scheme recovers the apparently optimal method in less than one minute,
but fails to find a better result after several hours.

The only existing SSP methods of order greater than four are the
hybrid methods of Huang \cite{huang2009}.  
Comparing the best
TSRK methods of each order with the best hybrid methods of each order,
the TSRK methods have substantially larger effective SSP coefficients.

The effective SSP coefficient is a fair metric for comparison
between methods of the same order of accuracy.  
Furthermore, our twelve-stage TSRK methods have sparse
coefficient matrices and can be implemented in the low-storage form \eqref{tsrk_so}.
Specifically, the fifth- through eighth-order methods of twelve stages require
only 5, 7, 7, and 10 memory locations per unknown, respectively,
under the low-storage assumption employed in 
\cite{ketcheson2008,ketcheson2010}.  Typically the methods with fewer
stages require the same or more storage, so
there is no reason to prefer methods with fewer stages if they have
lower effective SSP coefficients.  Thus, for sixth through eighth
order, the twelve-stage methods seem preferable.
The SSP TSRK methods recommended here even require less storage than what
(non-SSP one-step) Runge--Kutta methods of the corresponding
order would typically use.

In the case of fifth order methods, the eight-stage method 
has a larger effective SSP coefficient than the twelve stage method,
so the eight stage method seems best in terms of efficiency.  
However the eight stage method 
requires more storage registers (6) than the twelve
stage method (5). So while  the eight stage method  might be preferred
for efficiency, the twelve stage method is preferred for low storage
considerations.


\begin{table}
\caption{Effective SSP coefficients $\ceff$ of optimal
  explicit two-step Runge--Kutta methods of order five to eight.
\label{tbl-2step_58}}
\center
\begin{tabular}{l|cccc} \hline
$s$ $\backslash$ $p$ & 5      & 6    & 7    & 8 \\ \hline
4                & 0.214 & & & \\
5                & 0.324 & & & \\ 
6                & 0.385 & 0.099 & & \\
7                & 0.418 & 0.182 & & \\ 
8                & 0.447 & 0.242 & 0.071 & \\ 
9                & 0.438 & 0.287 & 0.124 & \\
10               & 0.425 & 0.320 & 0.179 & \\ 
11               & 0.431 & 0.338 & 0.218 & 0.031 \\
12               & 0.439 & 0.365 & 0.231 & 0.078 \\ \hline
\end{tabular}
\end{table}

  
  \section{Numerical Experiments}
  \subsection{Start-up procedure}
\label{sec:startup}

As mentioned in Section~\ref{sec:formulate_opt_prob}, TSRK are not
self-starting and thus require startup procedures, and while in
general this is somewhat complicated, for our class of methods it is
straightforward.
We only require that the starting procedure be of sufficient accuracy
and that it also be strong stability preserving.

Figure~\ref{fig:startup} demonstrates one possible start-up procedure
that we employed in our convergence studies and our other numerical
tests to follow.  The first step of size $\dt$ from $t_0$ to $t_1$ is
subdivided into substeps in powers of two.  The SSPRK(5,4) scheme
\cite{Spiteri/Ruuth:newclass,kraaijevanger1991} or the SSPRK(10,4)
scheme \cite{ketcheson2008} is used for the first substep, with the
stepsize $\dt^{*}$ chosen small enough so that the local truncation
error of the Runge--Kutta scheme is smaller than the global error of
the TSRK scheme.  Specifically, this can be achieved for an
TSRK method of order $p=5,6,7$ or $8$ by taking
\begin{align} 
  \dt^{*} = \frac{\dt}{2^{\gamma}}, \, \gamma \in \mathbb{Z},
  \quad \text{and} \quad
  (\dt^{*})^5 = A\dt^p = O(\dt^p).  \label{eq:startstepsize}
\end{align}
Subsequent substeps are taken with the TSRK scheme itself, doubling
the stepsizes until reaching $t_1$.  From there, the TSRK scheme
repeatedly advances the solution from $t_n$ to $t_{n+1}$ using
previous step values $u_{n-1}$ and $u_{n}$.



\begin{figure}
  \centering
  \begin{tikzpicture}[scale=0.75]
    \draw [->,semithick] (0,0) -- (16.7,0);

    \foreach \x in {0,1,2,4,8,12,16}
      \draw[gray,very thin] (\x,-0.9) -- (\x, 2.1);
    \foreach \x in {0,1,2,4,8,12,16}
      \filldraw (\x,0) circle (2pt);

    \draw (4,0) node[below=1ex] {\scriptsize
      $\underbrace{%
        \phantom{\tikz[scale=0.75] \draw (0,0) -- (8,0);}
      }_{\text{first full step of TSRK}}$
    };

    \draw [gray] (14,0) node[below=1ex] {\scriptsize
      $\underbrace{%
        \phantom{\tikz[scale=0.75] \draw (0,0) -- (4,0);}
      }_{\dt}$
    };

    \draw (2,0) node[above=5.5ex] {\scriptsize
      $\overbrace{%
        \phantom{\tikz[scale=0.75] \draw (0,0) -- (4,0);}
      }^{\text{TSRK}}$
    };

    \draw (1,0) node[above=2.5ex] {\scriptsize
      $\overbrace{%
        \phantom{\tikz[scale=0.75] \draw (0,0) -- (2,0);}
      }^{\text{TSRK}}$
    };

    \draw (0.5,0) node[above=-2pt] {\scriptsize
      $\overbrace{%
        \phantom{\tikz[scale=0.75] \draw (0,0) -- (1,0);}
      }^{\text{SSP54}}$
    };

    \draw (0,0) node[below] {\scriptsize $t_{0}$};
    \draw (4,0) node[below] {\scriptsize $t_{1}$};
    \draw (8,0) node[below] {\scriptsize $t_{2}$};
    \draw (12,0) node[below] {\scriptsize $t_{3}$};
    \draw (16,0) node[below] {\scriptsize $t_{4}$};
  \end{tikzpicture}%
  \caption{One possible startup procedure for SSP TSRK schemes. The
    first step from $t_0$ to $t_1$ is subdivided into substeps
    (here there are three substeps of sizes
    $\frac{h}{4}$, $\frac{h}{4}$, and $\frac{h}{2}$).  An SSP
    Runge--Kutta scheme is used for the first substep.  Subsequent
    substeps are taken with the TSRK scheme itself, doubling the
    stepsizes until reaching $t_1$.  We emphasize that the startup
    procedure is not critical for this class of TSRK methods.}
  \label{fig:startup}
\end{figure}
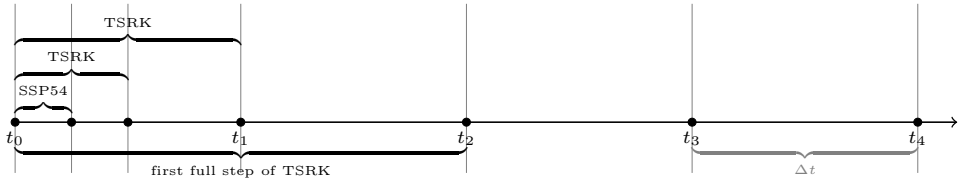

\subsection{Order Verification}

Convergence studies on two ODE test problems confirm that the
SSP TSRK methods achieve their design orders.  The first is the
Dahlquist test problem $u' = \lambda u$, with $u^0 = 1$ and
$\lambda=2$, solved until $t_f=1$.  Figure~\ref{fig:Dahlquist} shows a
sample of TSRK methods achieving their design orders on this problem.
The starting procedure used SSPRK(10,4) with the constant $A$ in
\eqref{eq:startstepsize}
set respectively to $[\frac{1}{2}, \frac{1}{2}, 10^{-2}, 10^{-3},
10^{-3}]$ for orders $p=4$, $5$, $6$, $7$, and $8$.

\begin{figure}
  \centerline{%
  \includegraphics[width=0.5\textwidth]{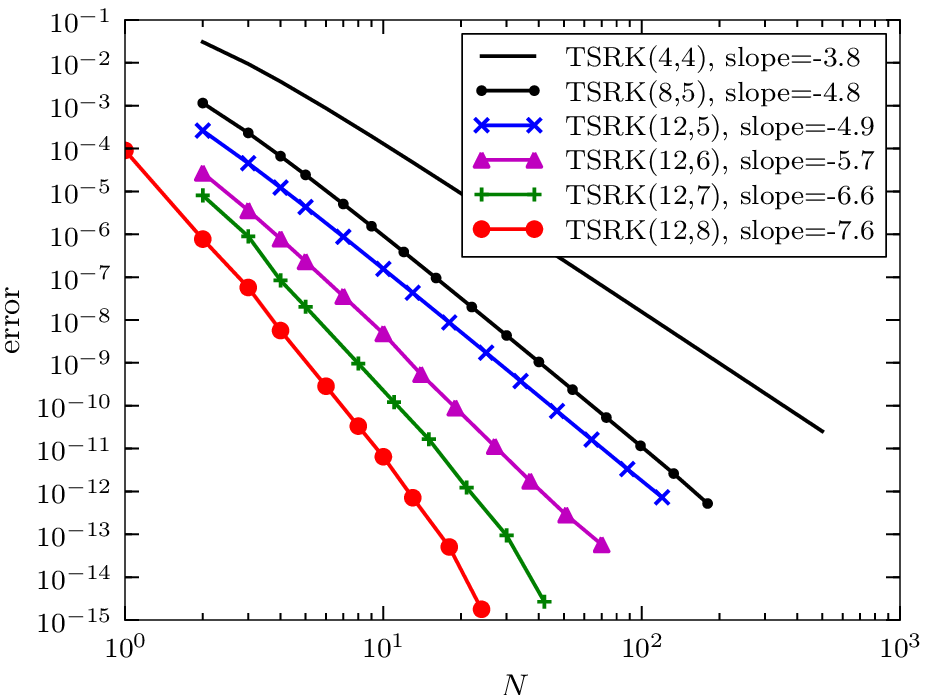}%
  \includegraphics[width=0.5\textwidth]{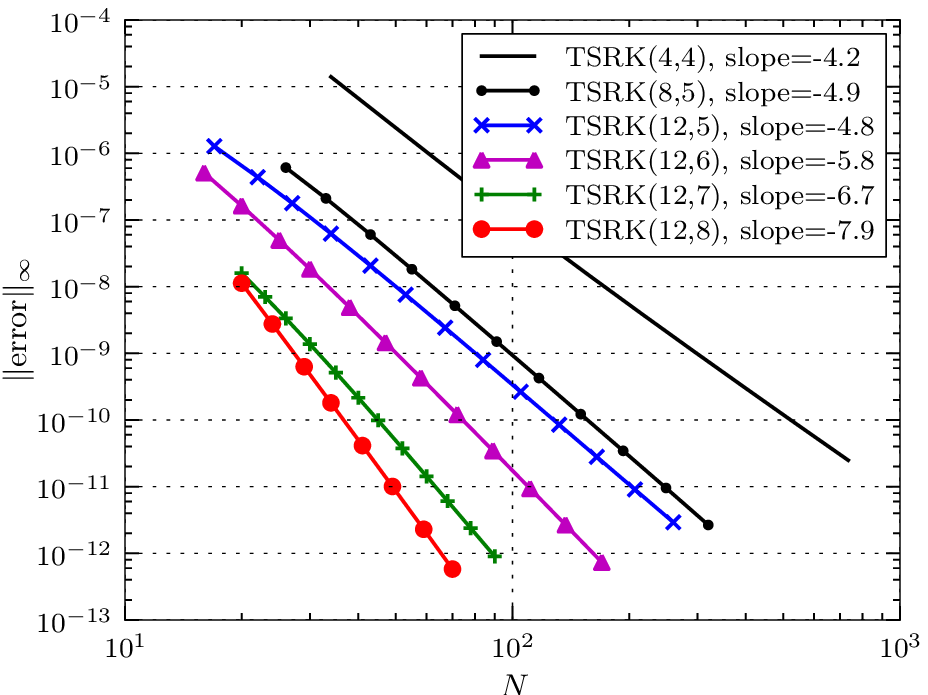}%
  }
  \caption{Convergence results for some TSRK schemes on the Dahlquist
    test problem (left) and van~der Pol problem (right).  
    The slopes of the lines confirm the design orders of the TSRK methods.}
  \label{fig:Dahlquist}
\end{figure}

The nonlinear van der Pol problem (e.g.,
\cite{Macdonald/Gottlieb/Ruuth:DSRK}) can be written as an ODE initial
value problem consisting of two components
\begin{align*}
  u_1' &= u_2, \\
  u_2' &= \frac{1}{\epsilon} \left( -u_1 + (1-u_1^2) u_2\right),
\end{align*}
where we use $\epsilon = 0.01$ with corresponding initial conditions
$u^0 = [2; -0.6654321]$ and solve until $t_f=\frac{1}{2}$.  The
starting procedure used SSPRK(10,4) with constant $A=1$ in \eqref{eq:startstepsize}.
Error in the maximum norm is estimated against a highly-accurate
reference solution calculated with \textsc{Matlab}'s ODE45 routine.
Figure~\ref{fig:Dahlquist} shows a sample of the TSRK schemes
achieving their design orders on this problem.



\subsection{High-order WENO}
\label{sec:hiweno}

Weighted essentially non-oscillatory schemes (WENO) 
\cite{harten1987a, harten1987b,jiang1996} 
are finite difference or finite volume schemes
that use linear combination of lower order fluxes 
to obtain a higher order approximations, while ensuring non-oscillatory solutions.  
This is accomplished by using adaptive stencils which approach centered difference 
stencils in smooth regions and one-sided stencils near discontinuities.  
Many WENO methods exist, and the difference between
them is in the computation of the stencil weights.  WENO methods can be constructed to be high order \cite{gerolymos2009, Balsara_Shu_2000:WENO9}.
In   \cite{gerolymos2009}, WENO of up to 17th-order were constructed and  tested numerically.
However, the authors note that in some of their computations the
error was limited by the order of the time
integration, which was relatively low (third-order SSPRK(3,3)).  In
Figure~\ref{fig:high_order_weno}, we reproduce the numerical
experiment of \cite[Fig.~15]{gerolymos2009}, specifically the 2D
linear advection of a sinusoidal initial condition $u_0(x,y) = \sin(\pi(x+y))$, 
in a periodic square using various high-order WENO
methods and our TSRK integrators of order 5, 7 and 8 using 12 stages.
Compared with \cite[Fig.~15]{gerolymos2009}, we note that the error is
no longer dominated by the temporal error.  Thus the higher-order SSP
TSRK schemes allow us to see the behavior of the high-order WENO
spatial discretization schemes.

\begin{figure}
  \centerline{%
  \includegraphics[width=0.5\textwidth]{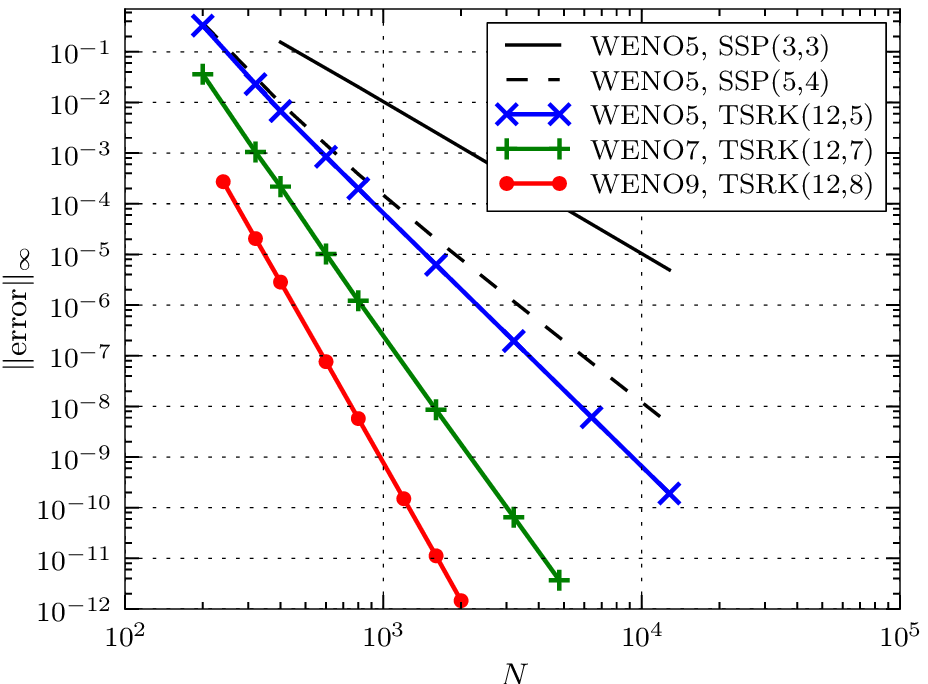}%
  \includegraphics[width=0.5\textwidth]{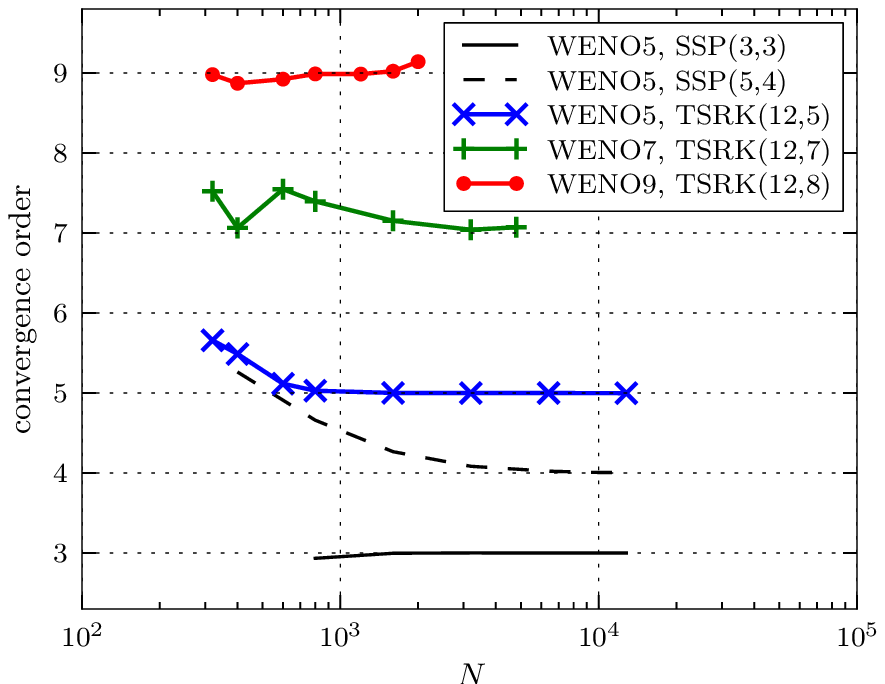}
  }
  \caption{Convergence results for 2D advection using $r$th-order WENO
    discretizations and the TSRK integrators (c.f.,
    \cite[Fig.~15]{gerolymos2009}).  Maximum error versus number of
    spatial grid points in each direction (left).  Observed orders of
    accuracy calculated from these errors (right).  Computed using the
    same parameters as \cite[Fig.~15]{gerolymos2009} (final time
    $t_f=20$, $\dt=0.5\dx$, mapped WENO spatial discretization with
    $p_{\beta} = r$).  Starting procedure as described in
    Section~\ref{sec:startup} using the SSPRK(5,4) scheme for the
    initial substep.}
  \label{fig:high_order_weno}
\end{figure}

\subsection{Buckley--Leverett}

The Buckley--Leverett equation is a model for two-phase flow through
porous media and consists of the conservation law
\begin{equation*}
U_t + f(U)_x = 0, \quad \text{with} \quad
f(U) = \frac{U^2}{U^2 + a(1-U)^2}.
\end{equation*}
We use $a=\frac{1}{3}$ and initial conditions 
\begin{equation*}
  u(x,0)= \left\{  \begin{array}{ll}
      1 & \mbox{if $x \le \frac{1}{2}$,} \\
      0 & \mbox{otherwise,} \\ \end{array} \right.
\end{equation*}
on $x \in [0,1)$ with periodic boundary conditions.  Our spatial
discretization uses $100$ points and following
\cite{Hundsdorfer/Verwer:2003, 
  ketcheson2009} we use a conservative scheme with Koren limiter.  We
compute the solution until $t_f = \frac{1}{8}$.  For this problem, the
Euler solution is total variation diminishing (TVD) for $\dt \le \DtFE
= 0.0025$ \cite{ketcheson2009}.  As
discussed above, we must also satisfy the SSP time-step restriction
for the starting method.


Figure~\ref{fig:bucklev} shows typical solutions using an TSRK scheme
with timestep $\dt = \sigma \DtFE$.  Table~\ref{tab:bucklev_TVD} shows
the maximal TVD time-step sizes, expressed as $\dt =
\sigma_{\text{BL}} \DtFE$, for the Buckley--Leverett test problem.
The results show that the SSP coefficient is a lower bound for what is
observed in practice, confirming the theoretical importance of the SSP
coefficient.

\begin{figure}
  \centerline{%
  \includegraphics[width=0.45\textwidth]{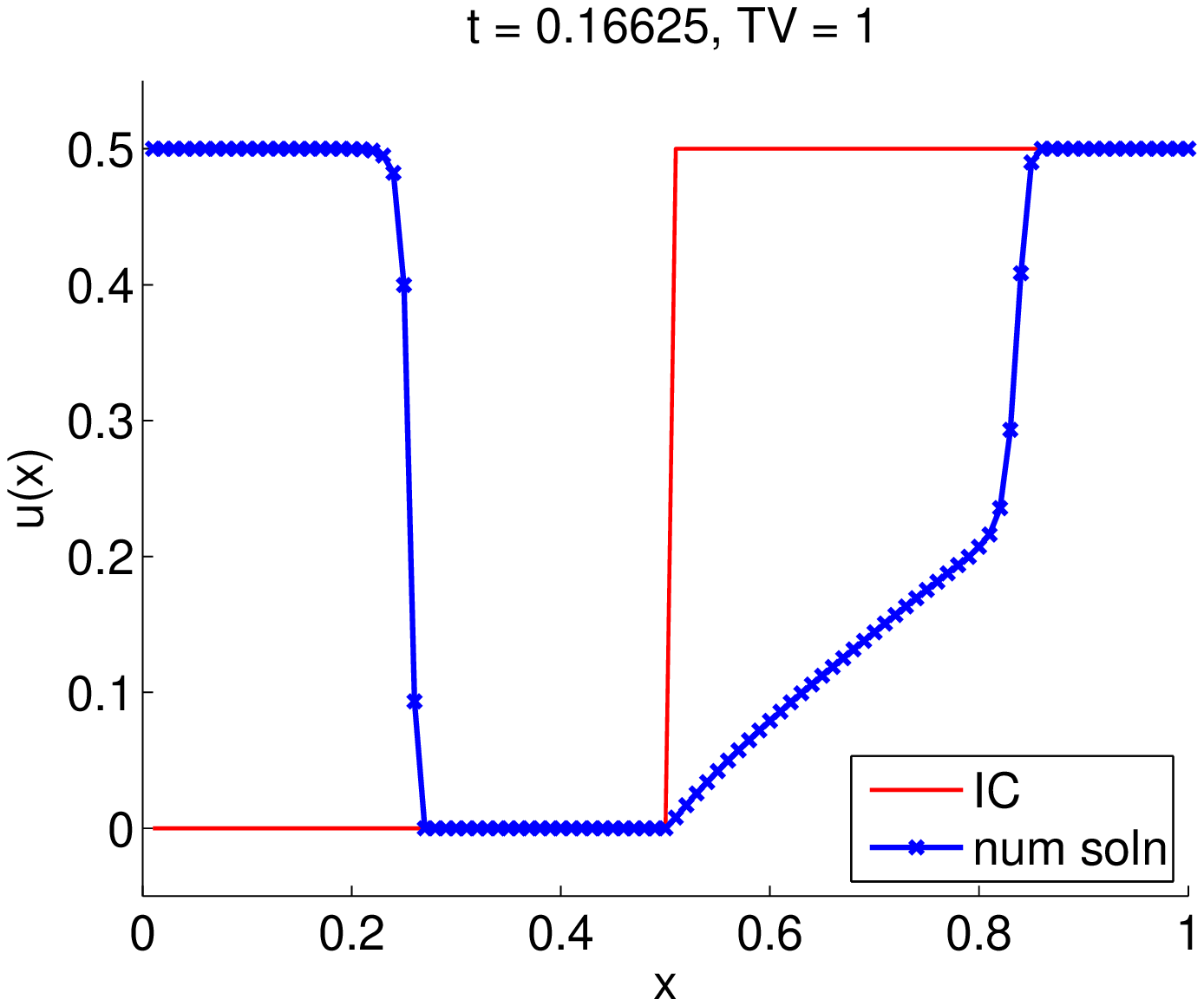}%
  \includegraphics[width=0.45\textwidth]{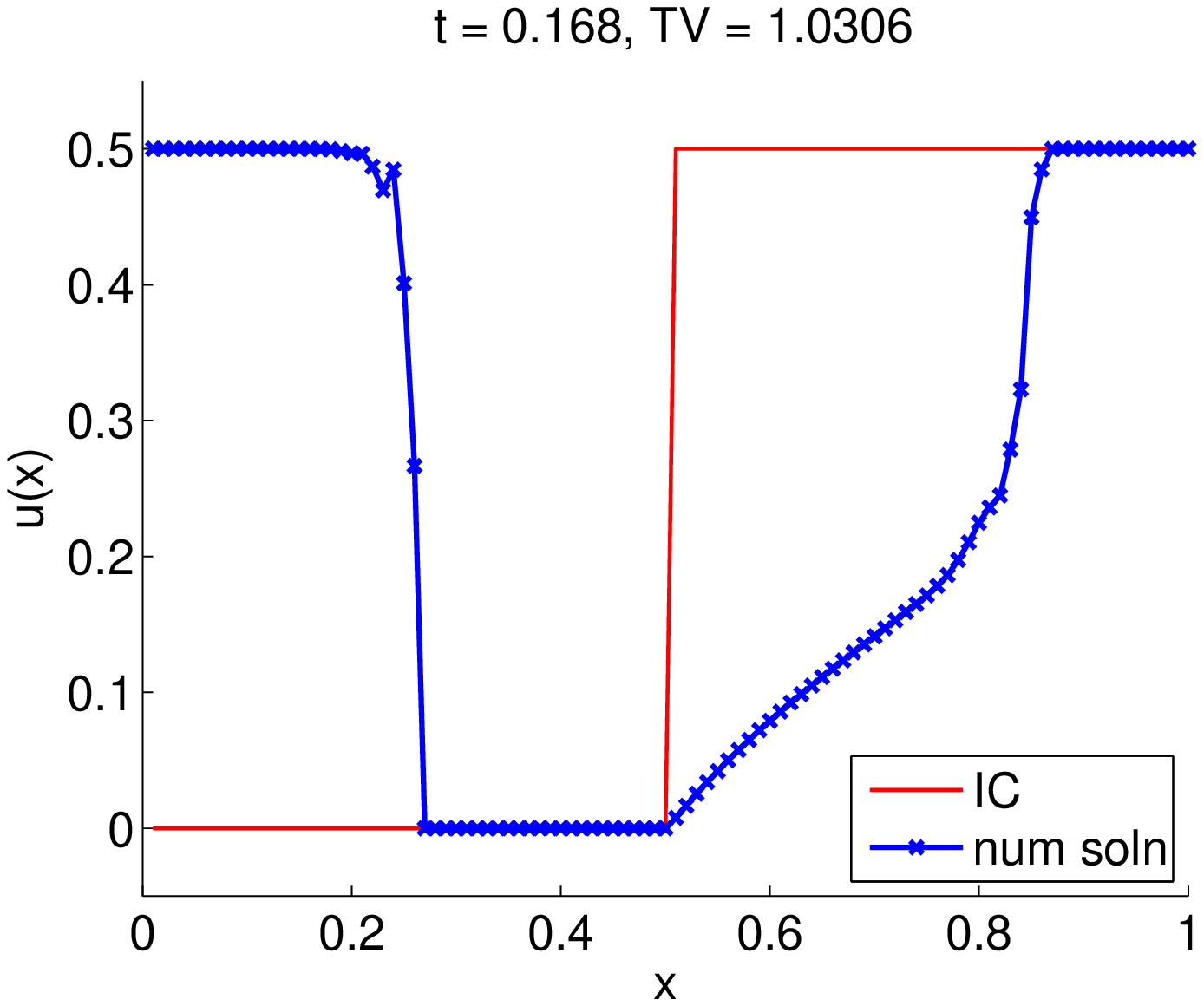}%
  }
  \caption{Two numerical solutions of the Buckley--Leverett test
    problem.  Left: time-step satisfies the SSP time-step restriction
    (TSRK(8,5) using $\dt = 3.5\DtFE$).  Right: time-step
    does not satisfy the restriction ($\dt = 5.6\DtFE$) 
    and visible oscillations have formed, increasing the total variation
    of the solution.}
  \label{fig:bucklev}
\end{figure}

\begin{table}
  \caption{SSP coefficients versus largest time steps exhibiting
    the TVD property ($\dt = \sigma_{\text{BL}} \DtFE$) on
    the Buckley--Leverett example, for
    some of the SSP TSRK($s$,$p$) schemes.  The effective SSP
    coefficient $\ceff$ should be a lower bound for
    $\sigma_{\text{BL}} / s$ and indeed this is observed.
    SSPRK(10,4) \cite{ketcheson2008} is used as the first step
    in the starting procedure.}
  \label{tab:bucklev_TVD}
  \center
  \begin{tabular}{l|cc|cc}
    \hline
    Method & \multicolumn{2}{c|}{theoretical} & \multicolumn{2}{c}{observed} \\
                  & $\sspcoef$ & $\ceff$&  $\sigma_{\text{BL}}$  & $\sigma_{\text{BL}} / s$  \\ \hline
      TSRK(4,4)   & 1.5917   &  0.398     &   2.16 &  0.540   \\
      TSRK(8,5)   & 3.5794   &  0.447     &   4.41 &  0.551  \\
      TSRK(12,5)  & 5.2675   &  0.439     &   6.97 &  0.581  \\
      TSRK(12,6)  & 4.3838   &  0.365     &   6.80 &  0.567  \\
      TSRK(12,7)  & 2.7659   &  0.231     &   4.86 &  0.405  \\
      TSRK(12,8)  & 0.94155  &  0.0785    &   4.42 &  0.368  \\
      \hline
  \end{tabular}
\end{table}

  \section{Conclusions}
  In this paper we have analyzed the strong stability preserving
property of two-step Runge--Kutta (TSRK) methods.  We find that SSP
TSRK methods have a relatively simple form and that explicit methods
are subject to a maximal order of eight.  We have presented
numerically optimal SSP TSRK methods of order up to this bound of
eight.  These methods overcome the fourth order barrier for (one-step)
SSP Runge--Kutta methods and allow larger SSP coefficients than the
corresponding order multi-step methods.  The discovery of these
methods was facilitated by our formulation of the optimization problem
in an efficient form, aided by simplified order conditions and
constraints on the coefficients derived by using the SSP theory for
general linear methods.  These methods feature favorable storage
properties and are easy to implement and start up, as they do not use
stage values from previous steps.

We show that high-order SSP two-step Runge-Kutta methods are useful
for the time integration of a variety of hyperbolic PDEs, especially
in conjunction with high-order spatial discretizations.  In the case
of a Buckley--Leverett numerical test case, the SSP coefficient of
these methods is confirmed to provide a lower bound for the actual
time-step needed to preserve the total variation diminishing property.

The order conditions and SSP conditions we have derived for these
methods extend in a very simple way to methods with more steps.
Future work will investigate methods with more steps and will further
investigate the use of start-up methods for use with SSP multi-step
Runge--Kutta methods.


%
%

\noindent {\bf Acknowledgment.} The authors are grateful to an anonymous referee, 
whose careful reading
and detailed comments improved several technical details of the paper.


\appendix
\section{Coefficients of Numerically Optimal Methods}

\setlength{\columnseprule}{0.4pt}
\setlength{\columnsep}{10pt}


\begin{table}[htb]
  \caption{Coefficients of the optimal explicit
  8-stage 5th-order SSP TSRK method (Type II) }
    \label{tbl-tsrk85}
    \begin{scriptsize}
   \begin{multicols*}{3}
$ \tilde{\theta} = 0$\\
$ \tilde{d}_0 =    1.000000000000000$\\
$ \tilde{d}_7 =   0.003674184820260 $\\
$\eta_2 =0.179502832154858$\\
$\eta_3 = 0.073789956884809$\\
$\eta_6 =0.017607159013167$\\
$\eta_8 = 0.729100051947166$\\
$q_{2,0}= 0.085330772947643$\\
$q_{3,0}=0.058121281984411$\\
$q_{7,0}= 0.020705281786630$\\
$q_{8,0}=   0.008506650138784$\\
$q_{2,1}=   0.914669227052357$\\
$q_{4,1}=   0.036365639242841$\\
$q_{5,1}=   0.491214340660555$\\
 $q_{6,1}=  0.566135231631241$\\
$q_{7,1}=   0.091646079651566$\\
 $q_{8,1}=  0.110261531523242$\\
$q_{3,2}=   0.941878718015589$\\
$q_{8,2}=   0.030113037742445$\\
$q_{4,3}=   0.802870131352638$\\
$q_{5,4}= 0.508785659339445$\\
$q_{6,5}= 0.433864768368758$\\
$q_{7,6}= 0.883974453741544$\\
 $q_{8,7}= 0.851118780595529$\\
      \end{multicols*}
    \end{scriptsize}%
  \rule{4.5in}{0.4pt}
\end{table}

\begin{table}[htb]
  \caption{Coefficients of the optimal explicit
    12-stage 5th-order SSP TSRK method (Type II)} 
    \label{tbl-tsrk125}
    \begin{scriptsize}
   \begin{multicols*}{3}
$ \tilde{\theta} = 0 $ \\
$ \tilde{d}_0 =   1$ \\
 $ \eta_1 =  0.010869478269914 $  \\
 $ \eta_6=0.252584630617780 $  \\
 $ \eta_{10} = 0.328029300816831 $  \\
$ \eta_{12} = 0.408516590295475 $  \\
 $ q_{2,0} =   0.037442206073461 $  \\
$ q_{3,0} = 0.004990369159650 $  \\ 
$ q_{2,1}= 0.962557793926539 $  \\
$ q_{6,1} =  0.041456384663457 $  \\
$ q_{7,1}=   0.893102584263455 $  \\
$ q_{9,1}= 0.103110842229401 $  \\
$ q_{10,1}=   0.109219062395598 $  \\ 
$ q_{11,1}=   0.069771767766966 $  \\
$ q_{12,1}=   0.050213434903531 $  \\
$ q_{3,2}= 0.750941165462252 $  \\
$ q_{4,3}=0.816192058725826  $  \\
$ q_{5,4}=0.881400968167496  $  \\
 $ q_{6,5}=  0.897622496599848  $  \\
 $ q_{7,6}=0.106897415736545  $  \\
  $ q_{8,6}=0.197331844351083 $     \\
$ q_{8,7} = 0.748110262498258  $  \\
$ q_{9,8} =0.864072067200705 $  \\
$ q_{10,9} = 0.890780937604403  $  \\
$ q_{11,10} =  0.928630488244921 $  \\
$ q_{12,11} = 0.949786565096469  $  \\
      \end{multicols*}
    \end{scriptsize}
    \rule{4.5in}{0.4pt}
\end{table}

\begin{table}[htb]
  \caption{Coefficients of the optimal explicit
    12-stage 6th-order SSP TSRK method (Type II)} 
    \label{tbl-tsrk126}
    \begin{scriptsize}
  \begin{multicols*}{3}
$ \tilde{\theta} = 2.455884612148108e-04$ \\
$ \tilde{d}_0 =   1$ \\
$ \tilde{d}_10 = 0.000534877909816$\\
$ q_{2,0} = 0.030262100443273$\\
$ q_{2,1} =  0.664746114331100$\\
$ q_{6,1} =  0.656374628865518$\\
$ q_{7,1} =    0.210836921275170$\\
$ q_{9,1} =    0.066235890301163$\\
$ q_{10,1} =   0.076611491217295$\\
$ q_{12,1} =    0.016496364995214$\\
$ q_{3,2} =  0.590319496200531$\\
$ q_{4,3} =  0.729376762034313$\\
$ q_{5,4} =    0.826687833242084 $\\
$ q_{10,4} =   0.091956261008213$\\
$ q_{11,4} =   0.135742974049075$\\
$ q_{6,5} =0.267480130553594$\\
$ q_{11,5} =0.269086406273540$\\
$ q_{12,5} =   0.344231433411227$\\
$ q_{7,6} =    0.650991182223416$\\
$ q_{12,6} =   0.017516154376138$\\
$ q_{8,7} =0.873267220579217 $\\
$ q_{9,8} =0.877348047199139$\\
$ q_{10,9} = 0.822483564557728$\\
$ q_{11,10} = 0.587217894186976$\\
$ q_{12,11} = 0.621756047217421$\\    
 $\eta_1=  0.012523410805564$\\
 $\eta_6 = 0.094203091821030$\\
  $\eta_9 = 0.318700620499891$\\
   $\eta_{10} =0.107955864652328$\\
 $\eta_{12} =   0.456039783326905$\\
       \end{multicols*}
     \end{scriptsize}
     \rule{4.5in}{0.4pt}
\end{table}

\begin{table}[htb]
  \caption{Coefficients of the optimal explicit 12-stage
    7th-order SSP TSRK method (Type II)} 
    \label{tbl-tsrk127}
    \begin{scriptsize}
    \begin{multicols*}{3}
$ \tilde{\theta} =  1.040248277612947e-04$\\
$ \tilde{d}_0 =     1.000000000000000$ \\
$ \tilde{d}_2 =    0.003229110378701$ \\
$ \tilde{d}_4 =    0.006337974349692$ \\
$ \tilde{d}_5 =    0.002497954201566$ \\
$ \tilde{d}_8 =    0.017328228771149$ \\
$ \tilde{d}_12 =   0.000520256250682$ \\
$ \eta_{0} = 0.000515717568412$\\
$ \eta_1 =   0.040472655980253 $  \\
$ \eta_6 =   0.081167924336040 $  \\
$ \eta_7 = 0.238308176460039 $  \\
$ \eta_8=   0.032690786323542 $  \\
$ \eta_{12}=0.547467490509490 $  \\
$q_{2,0}=  0.147321824258074$\\
$ q_{2,1} =   0.849449065363225 $  \\
$ q_{3,1} =   0.120943274105256 $  \\
$ q_{4,1} =   0.368587879161520 $  \\
$ q_{5,1} =   0.222052624372191 $  \\
$ q_{6,1} =   0.137403913798966 $  \\
$ q_{7,1} =   0.146278214690851 $  \\
$ q_{8,1} =   0.444640119039330 $  \\
$ q_{9,1} =   0.143808624107155 $  \\
$ q_{10,1} =   0.102844296820036 $ \\ 
$ q_{11,1} =   0.071911085489036 $  \\
$ q_{12,1} =   0.057306282668522 $  \\
$ q_{3,2} = 0.433019948758255 $  \\
$ q_{7,2}=  0.014863996841828 $  \\
$q_{9,2}  =  0.026942009774408 $  \\
$ q_{4,3}=  0.166320497215237 $     \\
$ q_{10,3}=   0.032851385162085 $  \\
 $ q_{5,4}=0.343703780759466 $  \\
 $ q_{6,5}=  0.519758489994316 $  \\
$ q_{7,6}= 0.598177722195673 $  \\
$ q_{8,7}=   0.488244475584515 $  \\
$ q_{10,7}=   0.356898323452469 $  \\
$ q_{11,7}=   0.508453150788232 $  \\
$ q_{12,7}=   0.496859299069734 $  \\
$ q_{9,8}=    0.704865150213419 $  \\
$ q_{10,9}=  0.409241038172241 $  \\
$ q_{11,10}=  0.327005955932695 $  \\
$ q_{12,11}= 0.364647377606582 $  \\
      \end{multicols*}
    \end{scriptsize}
  \rule{4.5in}{0.4pt}
\end{table}

\begin{table}[htb]
  \caption{Coefficients of the optimal explicit 12-stage 8th-order SSP TSRK method (Type II)} 
    \label{tbl-tsrk128}
    \begin{scriptsize}
    \begin{multicols*}{3}
    $ \tilde{\theta} = 4.796147528566197e-05 $  \\
    $ \tilde{d}_0  =   1.000000000000000$  \\
    $ \tilde{d}_2  =   0.036513886685777$  \\
   $ \tilde{d}_4 =    0.004205435886220$  \\
    $ \tilde{d}_5  =   0.000457751617285$  \\
    $ \tilde{d}_7  =   0.007407526543898$  \\
   $ \tilde{d}_8 =    0.000486094553850$  \\
   $ \eta_1=   0.033190060418244 $  \\
$ \eta_2= 0.001567085177702 $  \\
$ \eta_3=   0.014033053074861 $  \\
$ \eta_4=   0.017979737866822 $  \\
$ \eta_5= 0.094582502432986 $  \\
$ \eta_6 = 0.082918042281378 $  \\
$ \eta_7= 0.020622633348484 $  \\
$ \eta_8=  0.033521998905243 $  \\
$ \eta_9=0.092066893962539 $  \\
$ \eta_{10}=  0.076089630105122 $  \\
$ \eta_{11}=0.070505470986376 $  \\
$ \eta_{12}=0.072975312278165 $  \\
$ q_{2,0}=                0.017683145596548$  \\
$ q_{3,0}=    0.001154189099465$  \\
$ q_{6,0}=    0.000065395819685$  \\
$ q_{9,0}=    0.000042696255773$  \\
$ q_{11,0}=    0.000116117869841$  \\
$ q_{12,0}=    0.000019430720566$ \\ 
$ q_{2,1}=   0.154785324942633 $  \\
$ q_{4,1}=   0.113729301017461 $  \\
$ q_{5,1}=   0.061188134340758 $  \\
$ q_{6,1}=   0.068824803789446 $  \\
$ q_{7,1}=   0.133098034326412 $  \\
$ q_{8,1}=   0.080582670156691 $  \\
$ q_{9,1}=   0.038242841051944 $  \\
$ q_{10,1}=   0.071728403470890 $  \\
$ q_{11,1}=   0.053869626312442 $  \\
$ q_{12,1}=   0.009079504342639 $  \\
$ q_{3,2}=   0.200161251441789 $  \\
$ q_{6,2}=   0.008642531617482 $  \\
$ q_{4,3}=   0.057780552515458 $  \\
$ q_{9,3}=   0.029907847389714 $  \\
$ q_{5,4}=   0.165254103192244 $  \\
$ q_{7,4}=   0.005039627904425 $  \\
$ q_{8,4}=   0.069726774932478 $  \\
$ q_{9,4}=   0.022904196667572 $  \\
$ q_{12,4}=   0.130730221736770 $  \\
 $ q_{6,5}=   0.229847794524568 $  \\
 $ q_{9,5}=     0.095367316002296 $  \\
$ q_{7,6}=    0.252990567222936 $  \\
$ q_{9,6}=   0.176462398918299 $  \\
$ q_{10,6}=   0.281349762794588 $  \\
$ q_{11,6}=   0.327578464731509 $  \\
$ q_{12,6}=   0.149446805276484 $  \\
$ q_{8,7}=0.324486261336648 $  \\
$ q_{9,8}= 0.120659479468128 $  \\
$ q_{10,9}=0.166819833904944 $  \\
$ q_{11,10}= 0.157699899495506 $  \\
$ q_{12,11}= 0.314802533082027 $   \\
      \end{multicols*}
    \end{scriptsize}
    \rule{4.5in}{0.4pt}
\end{table}

\bibliography{ssptsrk}

\end{document}